\documentclass{article}

\usepackage{arxiv}
\usepackage[utf8]{inputenc} % allow utf-8 input
\usepackage[T1]{fontenc}    % use 8-bit T1 fonts

%% Packages
\RequirePackage{amsthm,amsmath,amsfonts,amssymb}
\RequirePackage[colorlinks, linkcolor=black, citecolor=blue, urlcolor=blue]{hyperref}%% uncomment this for coloring bibliography citations and linked URLs
\usepackage{cleveref}
\RequirePackage{graphicx} %% uncomment this for including figures
\RequirePackage{subcaption}
\RequirePackage{enumitem}
\RequirePackage[ruled]{algorithm2e}

%%%%%%%%%%%%%%%%%%%%%%%%%%%%%%%%%%%%%%%%%%%%%%
%%                                          %%
%% Uncomment next line to change            %%
%% the type of equation numbering           %%
%%                                          %%
%%%%%%%%%%%%%%%%%%%%%%%%%%%%%%%%%%%%%%%%%%%%%%
\numberwithin{equation}{section}
%%%%%%%%%%%%%%%%%%%%%%%%%%%%%%%%%%%%%%%%%%%%%%
%%                                          %%
%% For Axiom, Claim, Corollary, Hypothesis, %%
%% Lemma, Theorem, Proposition              %%
%% use \theoremstyle{plain}                 %%
%%                                          %%
%%%%%%%%%%%%%%%%%%%%%%%%%%%%%%%%%%%%%%%%%%%%%%
\theoremstyle{plain}
\newtheorem{theorem}{Theorem}
\newtheorem{proposition}[theorem]{Proposition}
\newtheorem{lemma}{Lemma}
%%%%%%%%%%%%%%%%%%%%%%%%%%%%%%%%%%%%%%%%%%%%%%
%%                                          %%
%% For Assumption, Definition, Example,     %%
%% Notation, Property, Remark, Fact         %%
%% use \theoremstyle{remark}                %%
%%                                          %%
%%%%%%%%%%%%%%%%%%%%%%%%%%%%%%%%%%%%%%%%%%%%%%
\theoremstyle{remark}
\newtheorem{definition}{Definition}

%%%%%%%%%%%%%%%%%%%%%%%%%%%%%%%%%%%%%%%%%%%%%%
%% Please put your definitions here:        %%
%%%%%%%%%%%%%%%%%%%%%%%%%%%%%%%%%%%%%%%%%%%%%%
\crefformat{footnote}{#2\footnotemark[#1]#3}

 %pour ecrire des belles majuscules en script
\newcommand{\abs}[1]{\left|#1\right|} %valeur absolue
\newcommand{\norm}[1]{\left\Vert#1\right\Vert} %norme

 %fonction indicatrice

\newcommand{\proba}[2]{\mathbb{P}_{#1}\left(#2 \right)}
\newcommand{\expect}[2]{\mathbb{E}_{#1}\left[ #2 \right]}

\renewcommand{\epsilon}{\varepsilon}

\newcommand{\C}{\mathcal{C}}

\newcommand{\F}{\mathcal{F}}

\newcommand{\M}{\mathcal{M}}

\newcommand{\X}{\mathcal{X}}

\newcommand{\bbG}{\mathbb{G}}
\newcommand{\bbX}{\mathbb{X}}

\newcommand{\Q}{\mathbb{Q}}
\newcommand{\R}{\mathbb{R}}

\newcommand{\as}{\textit{a.s.}}

%assumption
\newcommand{\lipsch}{\textbf{(L)} }
\newcommand{\bound}{\textbf{(B)} }

%graph specific
\newcommand{\nnodes}{n}
\newcommand{\Gn}{\mathcal{G}_{\nnodes}}
\newcommand{\Gun}{\mathcal{G}^{\nnodes}}

\newcommand{\ER}{ER}

%persistence specific
\newcommand{\db}{d_B}
\newcommand{\dg}{Dg}

\newcommand{\segm}{[0,T]}
\newcommand{\FHKD}{\mathcal{F}_{HKD}}
\newcommand{\FHPD}{\mathcal{F}_{HPD}}

%to remove later

%%%%%%%%%%%%%%%%%%%
% Define title page
%%%%%%%%%%%%%%%%%%%

\title{Heat diffusion distance processes : a statistically founded method to analyze graph data sets}

\author{
 Etienne Lasalle, \\
  Laboratoire de Mathématiques d'Orsay,\\
  CNRS, Université Paris-Saclay, \\
  Orsay, France\\
  \url{etienne.lasalle@universite-paris-saclay.fr} }

\begin{document}

\maketitle

\begin{abstract}%   <- trailing '%' for backward compatibility of .sty file
We propose two multiscale comparisons of graphs using heat diffusion, allowing to compare graphs without node correspondence or even with different sizes. These multiscale comparisons lead to the definition of Lipschitz-continuous empirical processes indexed by a real parameter. The statistical properties of empirical means of such processes are studied in the general case. Under mild assumptions, we prove a functional central limit theorem, as well as a Gaussian approximation with a rate depending only on the sample size. Once applied to our processes, these results allow to analyze data sets of pairs of graphs. We design consistent confidence bands around empirical means and consistent two-sample tests, using bootstrap methods. Their performances are evaluated by simulations on synthetic data sets.
\end{abstract}

%%%%%%%%%%%%%%%%%%%%%%%%%%%%%%%%%%%%%%%%%%%%%%
% Main text entry area
\section{Introduction}

Considering the current growth of available data and the modeling power of networks, methods to analyze graph-structured data have gained interest over the last few decades. Particular attention has been devoted to designing notions of distance between graphs.  
The design of these notions is highly constrained by the working framework. In particular, different types of information can be used depending on whether the graphs are directed or undirected, weighted or unweighted, have the same size or not. Another key factor to define distances is whether a node correspondence (NC) is known or not.
In the case of known NC, we can consider the graphs to be defined on the same vertex set and comparisons can be made at the edge scale. In this context, people have applied various metrics to compare adjacency matrices, Laplacian matrices, heat kernels \cite{hammond2013graph} and other matrices whose entries represent quantities associated to pairs of nodes \cite{koutra2013deltacon}.
On the other hand, when no NC is available, graphs are often compared at a mesoscopic or macroscopic level using structural summaries. People have used global statistics on graphs like degree distributions, network diameters, or clustering coefficients \cite{prvzulj2004modeling}.
Another well studied approach is to consider graphlets \cite{prvzulj2004modeling}, \textit{i.e.}, small given subgraphs that are counted in graphs. Then, various methods have been developed to compare the graphlet counts \cite{prvzulj2007biological, yaverouglu2014revealing, ali2014alignment, faisal2017grafene}.
Some work has also been pursued to exploit the structural information carried by spectra of operators \cite{wilson2008study, gera2018identifying}. 
To measure distances (or equivalently similarities) between graphs, one can also rely on graph kernels, hence benefiting from the general kernel methods to solve statistical problems. These various kernels can be based on neighborhood information, subgraphs structures, random walk properties, shortest paths. Unfortunately, these kernels often require additional information like node or edge labels, attributes, which are not always available. Moreover, not all graph kernels are able to handle weighted graphs. For more details on graph kernels, see the survey of \cite{kriege2020survey}.

Another powerful way to encode structural information about graphs is to use diffusion processes, like heat diffusion.
When working with weighted graphs, one can interpret weights as the thermal conductivity of edges, meaning that heat diffuses faster along edges with higher weights.
Note that unweighted graphs can always be seen as weighted graphs with weights in $\{0,1\}$.
Given initial conditions, the way heat diffuses can be used to characterize and compare graphs \cite{coifman2006diffusion, coifman2014diffusion, hammond2013graph, tsitsulin2018netlsd}.
This approach is appealing as it allows to analyze graphs at different scales by looking at different diffusion times $t$.
For small values of $t$, the diffusion only concerns a small neighborhood of the initially heated nodes, while for larger values it involves larger and possibly more complex structures, taking into account topological properties of the graph. Thus, the choice of relevant and informative diffusion times is essential.

For more references on comparisons of graphs, we refer the reader to \cite{soundarajan2014guide}, \cite{emmert2016fifty}, \cite{tantardini2019comparing} and references therein.

\subsection{Our contributions}
While a lot of the above notions of distances are often supported by experimental results and applications to learning or data mining tasks, they usually lack statistical foundations.
In this context, we provide new tools to analyze and compare graphs or even data sets of graphs, that benefit from statistical guarantees.
Our methods take advantage of the desirable multiscale property of heat diffusion. Moreover, one of our methods can deal with graphs without known NC or even graphs of different sizes, by using topological descriptors from topological data analysis (TDA).
To circumvent the difficulty of choosing a suitable diffusion time, we opt to take into account the whole diffusion process.
As a result, we define two real-valued processes, indexed by all the diffusion times in $\segm$ for some $T>0$, representing comparisons of heat distributions.
Basically, instead of trying to define a real value that would measure the distance between two graphs, we define a curve, a distance profile, that represents the comparisons of heat distributions across all diffusion times. 

The first process, called Heat Kernel Distance (HKD) process, is defined by comparing heat kernels with the Frobenius norm. In this case, a NC between graphs needs to be known for the entry-wise comparison of the heat kernels to be meaningful.
The second process, called Heat Persistence Distance (HPD), is defined using tools from TDA and can deal with graphs of different sizes.
To do so, each graph is equipped with a real-valued function defined on the vertex set: the Heat Kernel Signature (HKS) \cite{sun2009concise, hu2014stable}.
Then, graphs are converted into topological descriptors called persistence diagrams. They are multisets of points in $\R^2$, encoding how topological features, like connected components and loops, evolve along with the families of sublevel and superlevel subgraphs. The diagrams are then compared with the so-called Bottleneck distance.
Using persistence diagrams allows to switch from node-based representations of graphs to comparable topological summaries, hence requiring no assumption on graph sizes and NC.

To statistically study the HKD and HPD processes, we prove general results on Lipschitz-continuous real-valued empirical processes indexed by a real parameter. Namely, we show that they verify a functional Central Limit Theorem and admit Gaussian approximations with rates depending only on the sample sizes. These results ensure the asymptotic validity of bootstrap methods to design confidence bands around empirical mean processes, as well as consistent two-sample tests. 
They are applied to the HKD and HPD processes and could be applied to any other Lipschitz-continuous processes indexed by a real parameter under mild assumptions. 
In our graph framework, we can summarize the general idea as follows. We convert families of pairs of graphs into families of distance curves using HKD or HPD processes and then, we leverage the statistical properties of smooth curves mentioned above to exploit these families of curves.  
As a proof of concept, we illustrate these results on simulated data sets of pairs of graphs, drawn from various models: Erd\H{o}s-Rényi model, stochastic block model, and random geometric graph models. We also compare our HKD-based test with tests based on other notions of distance and show improvements when using the multi-scale comparison of graphs. 
The HPD-based test is compared with various tests based on graph kernels.

The results from this article allow to statistically analyze data sets of pairs of graphs. While it might seem more natural to study data sets of graphs, we would like to motivate our analysis. There are various situations where graphs naturally come in pairs, and where the relevant information is actually the structural changes inside the pairs. 
For example, when monitoring brain diseases \cite{rocca2016impaired, farahani2019application}, patients'~brains can be observed at different time points. In this case, the changes of the brains' connectivity are of prime interest to assess the evolution of the disease \cite{faivre2016depletion, castellazzi2018functional}.
Moreover, even when graphs don't come in pairs, there exist several ways to turn a data set of graphs into a data set of pairs of graphs. For example, one can consider all the possible pairs of graphs from the data set or split the data set into two groups and construct pairs that contain a graph of each group. 
However, designing relevant approaches to combine graphs into pairs is probably a task-dependent problem, which is beyond the scope of this article.

\subsection{Organisation}
The rest of the paper is organized as follows. Section~\ref{sec:heat_dist_proc} introduces the graph framework, heat diffusion on graphs, the HKD and HPD processes. General results of such processes are developed in Section~\ref{sec:emp_proc}. We introduce the framework for general continuous real-valued empirical processes indexed by a real parameter, prove their statistical properties and present some applications on bootstrap methods. These general results are applied to the HKD and HPD processes in Section~\ref{sec:result_hd}, where we also detail the construction of a HKD or HPD-based confidence bands and two-sample tests for samples of pairs of graphs. We provide asymptotic results on the level and power of these tests. Finally, as a proof of concept, we illustrate the construction of confidence bands and two-sample tests in Section~\ref{sec:experiments} using several generative models of random graphs. All Python codes are available at \url{https://github.com/elasalle/HeatDistanceProcess}.

\section{Comparing graphs using heat distance processes}
\label{sec:heat_dist_proc}

In this section, we introduce notions of distance between graphs based on heat diffusion. We define the resulting HKD and HPD processes.
\subsection{Background and definitions}

Before introducing the key notions of this work, we present general definitions and notations that will be used in the rest of the paper. We start by introducing notations relative to graph theory before presenting the theory of extended persistence adapted to graphs.

\subsubsection{Graphs}
For $0 \leq w_{\min} \leq w_{\max}$, we denote by $\Gn(w_{\min}, w_{\max})$ the set of undirected weighted graphs of size $\nnodes$, without self-loop and whose weights are in $\{0\} \cup [w_{\min}, w_{\max}]$. The special case of unweighted graphs correspond to $w_{\min}=w_{\max}=1$. For clarity in the notation, we remove the $w_{\min}$ and  $w_{\max}$, whenever there is no ambiguity.  We also consider $\Gun$ (with $\nnodes$ as an exponent) the set of graphs of size at most $\nnodes$, \textit{i.e.}, $\Gun = \cup_{1 \leq i \leq \nnodes} \mathcal{G}_i$. For a graph $G$ in $\Gn$, we denote by $W(G)$ its weight matrix (or adjacency matrix), \textit{i.e.}, the $\nnodes \times \nnodes$ symmetric matrix whose $(i,j)$-coefficient is the weight $w_{i,j}$ of edge $\{i,j\}$. The degree matrix $D(G)$ denotes the diagonal matrix whose entry $D(G)_{i,i}$ is the degree of node $i$ defined by $\sum_j w_{i,j}(G)$, the sum of all incident weights. The combinatorial Laplacian $L(G)$ is defined by $D(G)-W(G)$. Taking non-negative weights ensures that $L(G)$ is a real symmetric positive-semidefinite matrix. From now on, we forget the dependence in $G$ in the notation, whenever there is no ambiguity. Let $\lambda_1 \leq \dots \leq \lambda_\nnodes$ be the eigenvalues of $L$ and let $(\phi_1, \dots, \phi_{\nnodes})$ be a family of orthonormal eigenvectors. We denote by $\Lambda$ the diagonal matrix containing the eigenvalues on the diagonal and $\phi$ the matrix whose columns are the $\phi_i$'s so that $L$ admits the following decomposition
\begin{equation}
L = \phi \Lambda \phi^T = \sum\limits_{k = 1}^\nnodes \lambda_k \phi_k \phi_k^T.
\label{eq:laplacian_spectral_decomposition}
\end{equation}

Note that $\lambda_1 = 0$ and that $\phi_1$ can always be chosen to be the vector whose entries are equal to $1 / \sqrt{\nnodes}$. In the following, this choice will always be made.
%Furthermore, the number of zero eigenvalues of $L$ corresponds to the number of connected components in $G$. So if $G$ is connected, then $\lambda_2(G) >0$. \com{important?}

\subsubsection{Persistence on graphs}
We present here the basics of ordinary and extended persistence. We refer the reader to \cite{cohen2009extending}, \cite{edelsbrunner2010computational} and \cite{oudot2015persistence} for a complete description of these theories.

Persistence theory allows to study the topology of topological spaces in a multiscale manner. Usually, given a topological space $X$ and a continuous real-valued function $f : X \to \R$, one considers the family of sublevel sets $X_\alpha := \{ x \in X , \ f(x) \leq \alpha \}$, for $\alpha$ varying from $-\infty$ to $+\infty$.
Ordinary persistence records the levels at which topological features (connected components, loops, cavities or higher dimensional holes...) appear and disappear.
For each feature, its birth and death levels are stored as the coordinates $(b,d)$ of a point in $\R^2$. The multiset of these points is called a persistence diagram.
This framework can be applied to graphs.

Let $G = (V,E)$ be a graph with vertex set $V$ and edge set $E$, and $f$ be a real-valued function on $V$.
Consider the family of sublevel subgraphs $(G_\alpha)_{\alpha \in \R}$, where $G_\alpha = (V_\alpha, E_\alpha)$ with $V_\alpha = \{ v \in V , \ f(v) \leq \alpha \}$ and $E_\alpha = \{ \{v,v'\} \in E, \ v,v' \in V_\alpha \}$.
Across the family of sublevel graphs, as $\alpha$ increases, we can record birth and death levels of connected components, and birth levels of loops.
As a connected component dies when it gets connected to an older connected component, remark that the connected components of $G$ will never die.
Similarly, as a loop dies when it gets "filled-in" by a 2-dimensional object, loops appearing in $G_\alpha$ (an object of maximal dimension 1) will never die.
To prevent topological features from having no death levels (or infinite death levels), the theory of extended persistence suggests that the family of superlevel subgraphs should also be considered.
Define $G^\alpha = (V^\alpha, E^\alpha)$ similarly to $G_\alpha$ with $V^\alpha = \{ v \in V , \ f(v) \geq \alpha \}$.
A death level is now assigned to connected components of $G$ and loops as the level at which they appear in the family of superlevel subgraphs when $\alpha$ decreases.
Additionally, we record the birth and death of connected components in the family of superlevel subgraphs.
Hence, extended persistence is able to detect four types of topological features and extract their birth and death levels $(b,d)$ corresponding to four types of points:
\begin{itemize}
\item $Ord_0$ : birth and death of a connected component in $(G_\alpha)$.
\item $Rel_1$ : birth and death of a connected component in $(G^\alpha)$.
\item $Ext_0$ : birth and death of a connected component of $G$ when using $(G_\alpha)$ and $(G^\alpha)$.
\item $Ext_1$ : birth and death of a loop when using both $(G_\alpha)$ and $(G^\alpha)$.
\end{itemize}
These four types of topological features can be seen as downward branches, upward branches, connected components, and loops, respectively; with the orientation being taken with respect to $f$.
For a graph $G$ and a function $f$ on its vertices, we will denote by $Ord_0(G,f)$, $Rel_1(G,f)$, $Ext_0(G,f)$ and $Ext_1(G,f)$ the persistence diagrams containing the corresponding points. In the following, $\dg(G,f)$ will generically denote any of these four diagrams.
We refer the reader to \cite[Section 2.1]{carriere2020perslay} for a more precise and illustrative presentation of extended persistence diagrams on graphs.

The space of diagrams can be equipped with the Bottleneck distance $\db$. We recall its definition. Let $\mu$ and $\nu$ be two diagrams, \textit{i.e.}, two multisets of points in $\R^2$, and let $\Delta := \{ (a,a), a \in \R \}$ be the diagonal. Denote by $\Pi(\mu, \nu)$ the set of bijections from $\mu \cup \Delta$ to $\nu \cup \Delta$. $\db$ is defined by
\begin{equation*}
\db(\mu, \nu) := \underset{\pi \in\Pi(\mu, \nu) }{\inf} \ \underset{x \in \mu \cup \Delta}{\sup} \ \| x - \pi(x)\|_\infty.
\end{equation*}
We state a stability result for extended persistence diagrams computed on graphs. It is a consequence of a more general stability result for persistence diagrams \cite{chazal2016structure, cohen2009extending}.
\begin{theorem}
For all graphs $G = (V,E)$, for all $f, f' : V \to \R$, and for all diagram constructions $\dg$ among $Ord_0$, $Rel_1$, $Ext_0$ and $Ext_1$,
\begin{equation*}
\db( \dg(G,f) , \dg(G,f') ) \leq \| f-f' \|_\infty ,
\end{equation*}
with $\|f\|_\infty = \max \{ \abs{f(v)}, \ v \in V \}$.
\label{theo:bottleneck_stability}
\end{theorem}

\subsection{Heat distances}

Let $G$ be a graph in $\Gn$ and let $L$ be its Laplacian. For $t \geq 0$, let $u_t \in \R^\nnodes$ be the vector whose $i$-th coefficient represents the amount of heat of node $i$ at time $t$. Then, $u_t$ follows the heat equation:
\begin{equation*}
\forall t \geq 0, \quad \frac{d}{dt} u_t = - L u_t.
\end{equation*}
The solution is given by $u_t = e^{-tL} u_0$. The matrix $e^{-tL}$ is called the \textit{heat kernel} and describes how heat diffuses in the graph. The $i$-th column of $e^{-tL}$ contains the amount of heat of each node at time $t$, when a single unit of heat was placed at node $i$ at time $t=0$.
From (\ref{eq:laplacian_spectral_decomposition}), the heat kernel decomposes in
\begin{equation*}
e^{-tL} = \phi e^{-t\Lambda} \phi^T = \sum\limits_{k = 1}^\nnodes e^{-t\lambda_k} \phi_k \phi_k^T.
\end{equation*}

The heat kernel encodes all the solutions of the heat equation and will be used to define notions of distances between graphs. Note that the diffusion time $t$ acts as a scale parameter, where diffusion for small values of $t$ only considers direct neighborhoods of the nodes, while for larger values it takes more global structures into account. This multi-scale property will be exploited in the following.

\subsubsection{Heat Kernel Distance}
Assume that we know the NC for graphs in $\Gn$ and that we number the nodes such that the identity mapping gives the correspondences. Hence, comparing adjacency matrices, Laplacians, or heat kernels entry-wise becomes meaningful. Here we compare graphs through their heat kernels.
For two graphs $G$ and $G'$, define their Heat Kernel Distance (HKD) at time $t$ by
\begin{equation}
D_t((G,G')) = \| e^{-tL} - e^{-tL'} \|_F, 
\label{eq:HKD_def}
\end{equation}
%modification
where $L$ and $L'$ are the laplacian matrices of $G$ and $G'$ respectively, and $\| \cdot \nobreak \|_F$ denotes the Frobenius norm. 
%modification
This notion of distance was introduced by \cite{hammond2013graph}. Basically,  it compares different signals defined on the nodes of $G$ and $G'$. These signals are given by the heat distribution obtained after a diffusion time $t$, for all elementary initial conditions. 
%modification
Considering heat kernels instead of  adjacency or Laplacian matrices allows for a more robust notion of distance. 
%modification
Indeed, it is more aware of the global structures of graphs. Adding or removing edges with low impacts on the overall graphs structures results in small changes of the HKDs, especially for diffusion times that are not too close to zero. For example removing an edge joining two dense components is seen differently by the HKD than removing an edge inside one of the component 
(see Figure~\ref{fig:example_HKD}).
This would not be the case when comparing adjacency or laplacian matrices.
The example in Figure~\ref{fig:example_HKD} also illustrates the multi-scale property of HKDs. At a local scale the HKD only detects that an edge has been removed. However, at a larger scale the functional role of the removed edge for the graph connectivity is captured by the HKD.
Moreover, we choose to compare the heat kernels through the Frobenius norm as it is a Euclidean norm on the space of matrices. This property will be handy for deriving an exact expression of $D_t((G,G'))$ in terms of the eigen-elements of $L$ and $L'$ in Proposition~\ref{prop:hkd}. 

\begin{figure}
\centering
\includegraphics[width=.8\linewidth]{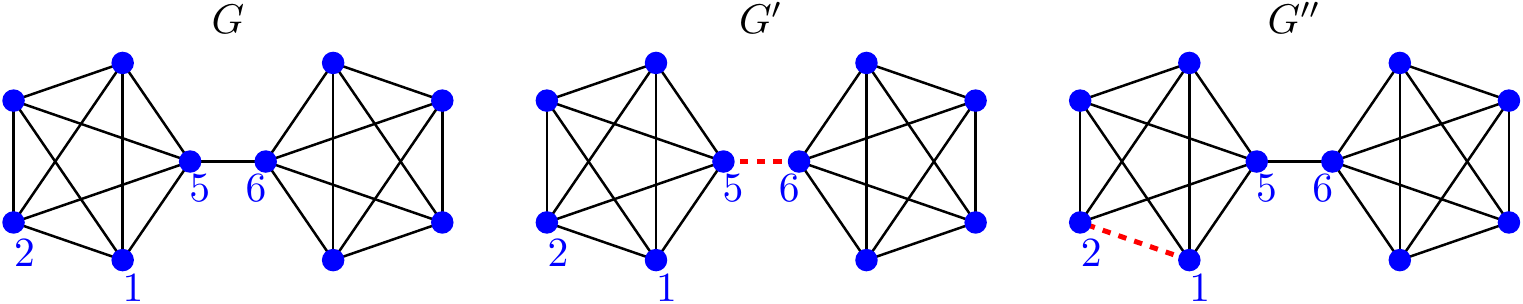}
\includegraphics[width=0.4\linewidth]{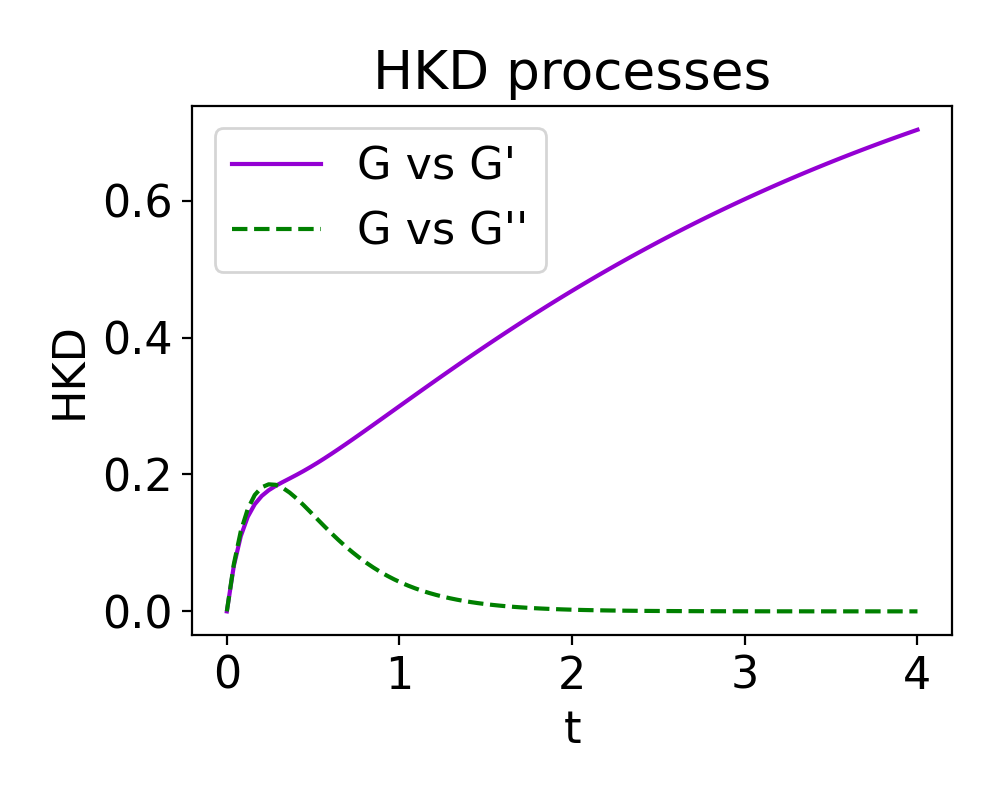}
\caption{Top: representation of three unweighted graphs $G$, $G'$, $G''$. Removing the edge $\{5,6\}$ to $G$ gives $G'$. Removing $\{1,2\}$ to $G$ gives $G''$. Bottom: we plot $t \mapsto D_t((G,G'))$ and $t \mapsto D_t((G,G''))$. }
\label{fig:example_HKD}
\end{figure}

To turn the HKD into a parameter-free notion of distance, \cite{hammond2013graph} define the \textit{Graph Diffusion Distance} as $\max_t D_t((G,G'))$. This has the drawback of comparing different pairs of graphs at different times. To avoid this effect and to take advantage of the multi-scale property of heat kernels mentioned above, our approach consists in using the whole function $t \to D_t((G,G'))$. More precisely, considering a probability distribution $P$ on $\Gn \times \Gn$ and a random pair of graphs $(G,G') \sim P$, we are interested in the stochastic process $\{D_t((G,G')), \ t \in [0,T]\}$ for some $T>0$. That is, the process obtained by evaluating the functions of the family $\FHKD := \{ D_t, \ t \in [0,T] \}$ on a random pair of graphs $(G,G')$. This framework corresponds to the general framework of \emph{empirical processes}.

\subsubsection{Heat Persistence Distance}
In practice, the NC between graphs is not always known. Additionally one may be interested in comparing graphs of different sizes. In these cases, HKD cannot be computed. To circumvent these issues and following ideas from \cite{carriere2020perslay}, we define the Heat Persistence Distance (HPD) by using extended persistence diagrams computed with the Heat Kernel Signature (HKS). These persistence diagrams can be compared with the Bottleneck distance $\db$ without any assumption on graph sizes and node
%modification
correspondence.

The HKS was first introduced by \cite{sun2009concise} for the study of shapes. Here we restrict ourselves to the definition of the HKS on graphs of \cite{hu2014stable}.
For a graph $G$ of size $\nnodes$ with vertex set $V = \{ 1, \dots , \nnodes \}$, the HKS at time $t$ is the function $h_t(G) : V \to \R$ such that
\begin{equation*}
h_t(G)(i) = \sum\limits_{k = 1}^\nnodes e^{-t \lambda_k} \phi_k(i)^2, \quad 1 \leq i \leq s.
\end{equation*}
Intuitively, the image of $h_t(G)$ corresponds to the diagonal of the heat kernel $e^{-tL}$. Hence, $h_t(G)(i)$ represents the remaining amount of heat at node $i$ after a diffusion time $t$, when a single unit of heat was placed at node $i$ at time $t=0$.
%modification
For each value of $t$, the HKS provides a function on the vertices of a graph, that we use to compute extended persistence diagrams.
%modification
Recall that these persistence diagrams encode the upward branches, downward branches, connected components and loops of the graph, when the up/down orientation is given by the HKS. 
The HPD at time $t$ between two graphs $G, G'$ in $\Gun$ is defined by
\begin{equation}
H_t((G,G')) = \underset{\dg}{\max} \   \db(\dg(G,h_t(G)) , \dg(G',h_t(G'))),
\label{eq:HPD_def}
\end{equation}
where the maximum is taken over the four diagram constructions $Ord_0$, $Rel_1$, $Ext_0$ and $Ext_1$.
In our simulations, persistence diagrams are computed by following the approach of \cite{carriere2020perslay} and using the Gudhi library \cite{maria2014gudhi}. 

Similarly to $\FHKD$, we define the family $\FHPD := \{ H_t, \ t \in [0,T] \}$ in order to study the induced stochastic process: $\{ H_t((G,G')), \ t \in [0,T] \}$, for some random pair of graphs $(G,G') \in \Gun \times \Gun$.

The statistical properties of the processes associated with $\FHKD$ and $\FHPD$ are developed in Section~\ref{subsec:stat_prop_hd}. But we first provide in Section~\ref{sec:emp_proc} a more general study of such empirical processes.

\section{General continuous empirical processes}
\label{sec:emp_proc}

In this section, we properly introduce the general framework for continuous empirical processes, then show that uniform boundedness and Lipschitz-continuity implies a functional central limit theorem, as well as a Gaussian approximation. Finally, we derive consequences on the construction of confidence bands and two-sample tests. 

\subsection{Background and definitions}
\label{subsec:emp_proc}

Let $I$ be a compact interval of $\R$ and $\C(I)$ the space of continuous real-valued functions on $I$ endowed with the metric induced by the uniform norm: $\norm{h}_\infty = \sup_{t \in I} \vert h(t) \vert$.
Consider a measurable space $(\bbX, \X)$. For all measures $Q$ on $(\bbX, \X)$ and all measurable functions $g : \bbX \to \R$, we denote the integral of $g$ with respect to $Q$ by $Qg := \int_\bbX g(x) dQ(x)$.
Consider a probability measure $P$ on $(\bbX, \X)$ and $\F := \{f_t, \ t \in I \}$, a family of measurable real-valued functions on $\bbX$ indexed by $I$.
For all $x \in \bbX$, define $f(x)$ as the function $t \to f_t(x)$, and assume that $f(x) \in \C(I)$.
Therefore, given a random variable $X$ with distribution $P$, one can equivalently see $\{ f_t(X), \ t \in I \}$ either as a random process or as $f(X)$ a random variable in $\C(I)$.

Given an i.i.d sample $X_1, \dots, X_N$ drawn under $P$, we are interested in the statistical properties of the mean function $N^{-1} \sum_i f(X_i)$ and its centered and scaled version $N^{-1/2} \left( \sum_i f(X_i) - Pf \right)$.
Equivalently, one can study the empirical processes $\{P_N f_t , \ t \in I \}$ and $\{G_N f_t, \ t \in I \}$, where $P_N = N^{-1} \sum_i \delta_{X_i}$, and $G_N = \sqrt{N} (P_N - P)$. In the following, we see random processes and random functions as the same objects.

When studying the statistical properties of $\F$, one might be interested in a functional version of the central limit theorem. This corresponds to the concept of Donsker families.
\begin{definition}[P-Donsker]
Let $P$ be a probability measure on $(\bbX, \X)$, the family $\F$ is called $P$-Donsker if the process $\{G_{N} f_t, \ t \in I \}$ converges in distribution to the centered Gaussian process $\{\bbG_t, \ t \in I\}$ with covariance function $\kappa$ defined as $\kappa_{s,t} := P f_t f_s -P f_t  P f_s$ for all $t,s \in I$.
\label{def:donsker}
\end{definition}
Here convergence in distribution means weak convergence in the space $\C(I)$. That is, for all continuous bounded functions $h : \C(I) \to \R$, $\lim_{N \to \infty}\expect{}{ h \left( G_N f \right) } =  \expect{}{ h \left( \bbG \right)}$,
where the expectation on the left-hand side is taken over the distribution of the sample $X_1, \dots, X_N$, and the one on the right-hand side is taken over the distribution of the Gaussian process $\bbG$.

Going further into the statistical analysis of $\F$, one might want to assess the speed at which $\{G_{N} f_t, \ t \in I \}$ converges in distribution to $\{ \bbG_t , \ t \in I \}$. This can be done by proving Gaussian approximation results.

\begin{definition}[Gaussian Approximation]
Let $(r_N)_{N \geq 1}$ be a vanishing sequence of positive real numbers. We say that the process $\{G_{N} f_t, \ t \in I \}$ admits a \textit{Gaussian approximation with rate $r_N$}, if for all $\lambda>1$,  there exists a constant $C$ such that for all $N \geq 1$ one can construct on the same probability space both the sample $X_1, \dots, X_N$ and a version $\bbG^{(N)}$ of the Gaussian process $\bbG$ verifying
\begin{equation*}
\proba{}{ \norm{ G_N f - \bbG^{(N)} }_\infty  > C . r_N  }\leq N^{- \lambda}.
\end{equation*}
\label{def:gauss_approx}
\end{definition}

Note that if $\{G_{N} f_t, \ t \in I \}$ admits a Gaussian approximation with rate $r_N$, applying the Borel-Cantelli Lemma would yield $\norm{ G_N f - \bbG^{(N)} }_\infty = O(r_N)$ \textit{almost surely}.

\subsection{Donsker theorem and Gaussian approximation}

Before stating the central theorem of the paper, we introduce two assumptions on $\F$.
\begin{itemize}[align=left]
\item[\lipsch -]  There exits $k>0$ such that for all $x \in \bbX$ the function $t \to f_t(x)$ is $k$-Lipschitz continuous on $I$, meaning that for all $t,s \in I$
\[ \abs{f_t(x) - f_s(x)} \leq k \abs{t-s}. \]
\item[\bound -] $\F$ is uniformly bounded. That is, there exists a constant $M>0$ such that for all $x \in \bbX$ and for all $t \in I$,
\[\abs{f_t(x)} \leq M. \]
\end{itemize}

Remark that assumptions \lipsch and \bound are simple and can easily be checked for most processes. We now show that they are sufficient to obtain a Donsker theorem and a Gaussian approximation result. 

\begin{theorem}
Assume that $\F$ verifies assumptions \lipsch and \bound. \\
Then for any probability measure $P$ on $(\bbX, \X)$, $\F$ is $P$-Donsker and $\{G_{N} f_t, \ t \in I \}$ admits a Gaussian approximation with rate $r_N = N^{-1/7} \log N^{9/14}$.
\label{theo:donsker}
\end{theorem}

This theorem provides a functional central limit theorem as well as some information about the rate of convergence. It allows us to derive in the next section more practical consequences. Namely, it validates the construction of consistent confidence bands and consistent two-sample tests using bootstrap methods. 

The proof of Theorem~\ref{theo:donsker} can be found in Appendix~\ref{sec:proof_donsker}. The Donsker property is proved by standard arguments of tightness. The proof of Gaussian approximation is based on a result from \cite{berthet2006revisiting} requiring technical work and a more complex formalism. Essentially, by using Lipschitz-continuity we control the covering number of $\F$, which is a quantity that indicates the complexity of the family of functions. Even if Theorem~\ref{theo:donsker} is a consequence of \cite{berthet2006revisiting}, working in the special case of continuous processes indexed by a real parameter allows us to present a more accessible result. And although the main interest of the paper is its application to the HKD and HPD processes, we believe that the framework is simpler and that this theorem could easily be applied to other processes.

\subsection{Statistical applications}
\label{subsec:stat_cons}

Here we present how we can construct confidence bands around empirical mean processes and two-sample tests while retrieving statistical guarantees from Theorem~\ref{theo:donsker}.

\subsubsection*{Confidence Band}
Let $c_\alpha := \inf\{ u , \ \proba{}{ \norm{\bbG}_{\infty} > u } \leq \alpha \}$ be the upper $\alpha$-quantile of the maximum of the Gaussian limit process. As a consequence of Theorem~\ref{theo:donsker}, we have
\begin{equation*}
\lim_{N \to \infty} \proba{}{ \forall t, P f_t \in \left[ P_N f_t - \frac{c_\alpha}{\sqrt{N}}, P_N f_t + \frac{c_\alpha}{\sqrt{N}} \right] } \geq 1 - \alpha .
\end{equation*}
Unfortunately, as the distribution of $\bbG$ is unknown, $c_\alpha$ can not be directly computed. Instead, consider a bootstrap sample $\hat{X}_1, \dots, \hat{X}_N$ drawn under $P_N$ and let $\hat{P}_N$ be its empirical probability measure. Consider the process $\{\hat{G}_N f_t , t \in I\}$ where $\hat{G}_N$ is the measure $\sqrt{N} ( \hat{P}_N - P_N )$ and let $\hat{c}_\alpha$ be the upper $\alpha$-quantile of $\|\hat{G}_N f \|_{\infty}$ given the data. Theorem~2.6 in \cite{kosorok2008introduction} ensures that when $\F$ is $P$-Donsker, $\{\hat{G}_N f_t , t \in I\}$ converges weakly to $\bbG$, given the data. Hence $\hat{c}_\alpha$ provide an approximation of $c_\alpha$, and we can use $\hat{c}_\alpha$ to design a consistent confidence band around $P_N f$:
\begin{equation*}
\lim_{N \to \infty} \proba{}{  \forall t, P f_t \in \left[ P_N f_t - \frac{\hat{c}_\alpha}{\sqrt{N}}, P_N f_t + \frac{\hat{c}_\alpha}{\sqrt{N}} \right]}) \geq 1 - \alpha .
\end{equation*}
Note that $\hat{c}_\alpha$ can be estimated with Monte-Carlo simulations, by drawing as many bootstrap samples as we want.

\subsubsection*{Two Sample Test}
 Consider the following setup. Let $P$ and $Q$ be two probability distributions on $\bbX$, and assume we are given two independent iid samples $(X_1, \dots , X_M)$ and $(Y_1, \dots Y_N)$, drawn under $P$ and $Q$,  respectively. We denote by $P_M$ and $Q_N$ the empirical measures. We would like to test the null hypothesis $H_0 : P=Q$ against the alternatives $H_1 : P \neq Q$, by using the family $\F$, assuming that it is Donsker with respect to both $P$ and $Q$. We follow the approach described in Section~3.7 of \cite{van1996weak}, and consider the following test statistic:
\begin{equation*}
 D_{M,N} := \sqrt{\frac{MN}{M+N}} \norm{P_M f - Q_N f}_{\infty}.
\end{equation*}
The strategy is to define a data-dependent threshold $\hat{c}_{M,N}(\alpha)$ and reject the null hypothesis whenever $D_{M,N}>\hat{c}_{M,N}(\alpha)$. Consider the pooled data $(Z_1, \dots Z_{M+N}) = (X_1 , \dots, X_M, Y_1, \dots, Y_N)$, and its empirical measure $H_{N+M}$. Let $(\hat{Z}_1, \dots \hat{Z}_{M+N})$ be a bootstrap sample drawn from $H_{M+N}$ and consider the bootstrap empirical measures
\begin{equation*}
\hat{P}_M = \frac{1}{M} \sum\limits_{i = 1}^{M} \delta_{\hat{Z}_i} \qquad \textnormal{and} \qquad \hat{Q}_N = \frac{1}{N} \sum\limits_{i = 1}^{N} \delta_{\hat{Z}_{M+i}}.
\end{equation*}
We can define
\begin{equation*}
\hat{D}_{M,N} := \sqrt{\frac{MN}{M+N}} \norm{\hat{P}_M f - \hat{Q}_N f}_{\infty},
\end{equation*}
as well as
\begin{equation*}
\hat{c}_{M,N}(\alpha) = \inf \left\{ t, \ \proba{}{ \hat{D}_{M,N}  > t \  \Big\vert \ Z_1, \dots, Z_{N+M} } \leq \alpha \right\},
\end{equation*}
for $\alpha \in (0,1)$.
Note that $\hat{c}_{M,N}(\alpha)$ can be estimated with Monte-Carlo simulations. Using $\hat{c}_{M,N}(\alpha)$ as the threshold to accept or reject $H_0$ leads to a consistent test.

\begin{theorem}[\cite{van1996weak}, Section 3.7.2]
Assume that $\F$ is Donsker with respect to both $P$ and $Q$ and that $\norm{Pf}_\infty$ and $\norm{Qf}_\infty$ are finite. Furthermore assume that $M / (M+N) \to \lambda \in (0,1)$. Then the test that rejects $H_0$ whenever $D_{M,N} > \hat{c}_{M,N}(\alpha)$ is consistent, in the sense that the asymptotic level is $\alpha$ and under any alternative verifying $\norm{Pf-Qf}_{\infty} > 0$, $\proba{}{D_{M,N} > \hat{c}_{M,N}(\alpha)} \to 1$.
\label{theo:test_consistency}
\end{theorem}

This provides guarantees of the two-sample test when both sample sizes grow to infinity. Intuitively, under $H_0$, as both the process defining $D_{M,N}$ and the one defining $\hat{D}_{M,N}$ given $Z_1, \dots, Z_{N+M}$ converge to the same Gaussian process, we can sample from the distribution of the latter (randomness coming only from resampling) to estimate the quantile of the former.

\section{Back to the heat distance processes}
\label{sec:result_hd}

Using the results of the previous section, we are able to show that the HKD and HPD processes admit a functional version of the central limit theorem, as well as a Gaussian approximation. This is essentially based on the fact that they are uniformly bounded Lipschitz-continuous processes. From these results, we propose a procedure to construct consistent confidence bands. We also detail the construction of a two-sample test for samples of pairs of graphs based on either the HPD processes or the HKD processes when the graph sizes and NC allow their computation. We explicit these constructions and state their consistency results.  \medskip

\subsection{Statistical properties}
\label{subsec:stat_prop_hd}

Let $P$ be a probability distribution on $\Gn \times \Gn$. Let $T$ be a positive real number and recall that $\FHKD := \{ D_t, \ t \in \segm \}$, with $D_t$ defined in (\ref{eq:HKD_def}). We will study the centered and rescaled empirical process $\{G_N D_t, \ t \in I \} = \{\sqrt{N} (P_N - P) D_t, \ t \in I \} $, where $P_N$ is the empirical measure $N^{-1} \sum_i \delta_{(G_i, {G'}_i)}$ associated to a $N$-sample $((G_1,{G'}_1), \dots, (G_N,{G'}_N))$ drawn under $P$.
We now state the statistical results concerning the HKD process.

\begin{theorem}
For any probability distribution $P$ on $\Gn \times \Gn$, the family $\FHKD$ is $P$-Donsker. That is, the process $\{\sqrt{N} (P_N - P)D_t , \ t \in \segm \}$ converges weakly to $\bbG$ in $\C(\segm)$, where $\bbG = \{ \bbG_t , \ t \in \segm\}$ is a zero mean Gaussian process with covariance function $\kappa(t,s) = P(D_t D_s) - PD_t . PD_s$. \\
Moreover, it admits a Gaussian approximation with rate $r_N = N^{-1/7} \log N^{9/14}$.
\label{theo:HKDdonsker}
\end{theorem}

We have a similar result for HPD processes. Recall that for HPD processes we work in $\Gun$. Let $P$ be a probability distribution on $\Gun \times \Gun$. Let $T$ be a positive real number and recall that $\FHPD := \{ H_t, \ t \in \segm \}$, with $H_t$ defined in (\ref{eq:HPD_def}). Again, we study the empirical process $\{G_N H_t, \ t \in I \} = \{\sqrt{N} (P_N - P) H_t, \ t \in I \} $, where $P_N$ is the empirical measure $N^{-1} \sum_i \delta_{(G_i, {G'}_i)}$ associated to a $N$-sample $((G_1,{G'}_1), \dots, (G_N,{G'}_N))$ drawn under $P$.

\begin{theorem}
For any probability distribution $P$ on $\Gun \times \Gun$, the family $\FHPD$ is $P$-Donsker. Thus, the process $\{\sqrt{N} (P_N - P)H_t , \ t \in \segm \}$ converges weakly to $\bbG$ in $\C(\segm)$, where $\bbG = \{ \bbG_t , \ t \in \segm\}$ is a zero mean Gaussian process with covariance function $\kappa(t,s) = P(H_t H_s) - PH_t . PH_s$. \\
Moreover, it admits a Gaussian approximation with rate $r_N = N^{-1/7} \log N^{9/14}$.
\label{theo:HPDdonsker}
\end{theorem}

These theorems can be seen as functional central limit theorems for the HKD and HPD processes. They validate bootstrap methods for the construction of consistent confidence bands and consistent two-sample tests. Gaussian approximations strengthen these results and provide information about the speed of convergence to Gaussian processes. Note that this rate is independent of the graph size $\nnodes$. This could indicate that even in the finite sample case with large graphs, methods based on the Donsker property could work well.  

The proof of Theorem~\ref{theo:HKDdonsker} and Theorem~\ref{theo:HPDdonsker} can be found in Appendix~\ref{sec:HDdonsker_proof}. We start by showing that under the hypothesis of bounded weights on the edges of the graphs the HKD and HPD process are uniformly bounded and Lipschitz-continuous, then the proofs are direct applications of Theorem~\ref{theo:donsker}. 
%\begin{proof}[Proof of Theorem~\ref{theo:HPDdonsker} and Theorem~\ref{theo:HPDgauss_approx}]
%Proposition~\ref{prop:hpd} shows that $\FHPD$ verifies hypothesis $\bound$ and $\lipsch$. Hence applying Theorem~\ref{theo:donsker} and Theorem~\ref{theo:gauss_approx} give Theorem~\ref{theo:HPDdonsker} and Theorem~\ref{theo:HPDgauss_approx}.
%\end{proof}

\subsection{Confidence band and consistency}

As explained in Section~\ref{subsec:stat_cons}, Donsker results allow for the construction of consistent confidence bands using bootstrap methods. In Algorithm~\ref{alg:conf_band}, we detail this construction based on the HKD and HPD processes on a sample of pairs of graphs. As the construction is similar in the HKD or HPD case, we only present the HKD case.

\begin{algorithm}
\SetKwInOut{Input}{Input}
\caption{Confidence Band}
\label{alg:conf_band}
\KwData{a sample of pairs of graphs $((G_1, G'_1) , \dots (G_N, G'_N))$}
\Input{an integer $B$ and a level $\alpha$.}
\BlankLine
\For{$i \in \{1, \dots, N\}$}{
	compute $d_i : t \mapsto D_t((G_i, G'_i))$\;
}
compute $\overline{d}_N = N^{-1} \sum_i d_i$\;
\For{$b \in \{1, \dots, B\}$}{
	draw with replacement a bootstrap sample $(\hat{d}^{(b)}_1, \dots \hat{d}^{(b)}_N)$ out of $(d_1, \dots d_N)$\;
	compute $\tilde{d}^{(b)}_N = N^{-1} \sum_i \hat{d}^{(b)}_i$\;
	compute $t^{(b)} = \sqrt{N} \| \tilde{d}^{(b)}_N - \overline{d}_N  \|_\infty$\;
}
compute $\tilde{c}_\alpha$ the empirical upper $\alpha$-quantile of $t^{(1)}, \dots, t^{(B)}$\;
\BlankLine
\KwResult{ the confidence band $\left[ \overline{d}_N(t) - \frac{\tilde{c}_\alpha}{\sqrt{N}}, \overline{d}_N(t) + \frac{\tilde{c}_\alpha}{\sqrt{N}} \right]$ for all $t$.}

\end{algorithm}

By choosing the bootstrap sample size $B$ large enough, $\tilde{c}_\alpha$ can approximate as closely as we want the upper $\alpha$-quantile $\hat{c}_\alpha$ of the bootstrap test statistic $t^{(1)}$ given the data. And according to Section~\ref{subsec:stat_cons} we know that
\begin{equation*}
\lim_{N \to \infty} \proba{}{  \forall t, \expect{P}{D_{t}((G,G'))} \in \left[ \overline{d}_N(t) - \frac{\hat{c}_\alpha}{\sqrt{N}}, \overline{d}_N(t) + \frac{\hat{c}_\alpha}{\sqrt{N}} \right]}) \geq 1 - \alpha .
\label{eq:consistency_cb}
\end{equation*}
This means that for large enough $B$, Algorithm~\ref{alg:conf_band} produces a consistent confidence band.

\subsection{Two-sample test and consistency}

We now detail how to construct a two-sample test for sample of pairs of graphs in Algorithm~\ref{alg:two_sample_test}. As previously, we present the HKD case, as the HPD case is similar.

\begin{algorithm}
\SetKwInOut{Input}{Input}
\caption{Two-sample Test}
\label{alg:two_sample_test}
\KwData{two samples of pairs of graphs \\ $((G_{1,1}, G'_{1,1}) , \dots (G_{1,N}, G'_{1,N}))$ and $((G_{2,1}, G'_{2,1}) , \dots (G_{2,M}, G'_{2,M}))$}
\Input{an integer $B$ and a level $\alpha$.}
\BlankLine
\For{$i \in \{1, \dots, N\}$}{
	compute $d_{1,i} : t \mapsto D_t((G_{1,i}, G'_{1,i}))$\;
}
\For{$i \in \{1, \dots, M\}$}{
	compute $d_{2,i} : t \mapsto D_t((G_{2,i}, G'_{2,i}))$\;
}
compute the empirical means $\overline{d}_1 = N^{-1} \sum_i d_{1,i}$ and $\overline{d}_2 = M^{-1} \sum_i d_{2,i}$\;
compute the test statistic $T = \sqrt{NM (N+M)^{-1}} \|d_1 - d_2\|_\infty$\;
\For{$b \in \{1, \dots, B\}$}{
	draw with replacement $(\hat{d}^{(b)}_{1,1}, \dots \hat{d}^{(b)}_{1,N})$ and $(\hat{d}^{(b)}_{2,1}, \dots \hat{d}^{(b)}_{2,M})$  out of $(d_{1,1}, \dots d_{1,N}, d_{2,1}, \dots d_{2,M})$\;
	compute $\tilde{d}^{(b)}_{1} = N^{-1} \sum_i \hat{d}^{(b)}_{1,i}$ and $\tilde{d}^{(b)}_{2} = M^{-1} \sum_i \hat{d}^{(b)}_{2,i}$ \;
	compute $T^{(b)} = \sqrt{NM (N+M)^{-1}} \|\tilde{d}^{(b)}_{1} - \tilde{d}^{(b)}_{2}\|_\infty$\;
}
compute $\tilde{c}_\alpha$ the empirical upper $\alpha$-quantile of $T^{(1)}, \dots, T^{(B)}$\;
\BlankLine
\KwResult{ Reject the null hypothesis if  $T > \tilde{c}_\alpha$.}

\end{algorithm}

Once again, for a large enough $B$, $\tilde{c}_\alpha$ closely approximate the the upper $\alpha$-quantile of $T^{(1)}$ given the data. Recall that from Theorem~\ref{theo:test_consistency}, using $\hat{c}_\alpha$ as the rejection threshold yields to a control of the asymptotic level and power. That is 
\begin{equation*}
\lim \proba{H_0}{T > \hat{c}_\alpha} \leq \alpha,
\end{equation*}
and under any alternative verifying $\norm{P_1 D_{\cdot} - P_2 D_{\cdot}}_{\infty} > 0$ where $P_1$ and $P_2$ are the distributions that generated the two samples, 
\begin{equation*}
\lim \proba{H_1}{T > \hat{c}_\alpha} = 1.
\end{equation*}
The limits are for both $N$ and $M$ going to infinity under the condition that $N/(N+M) \to  \lambda$, for a $\lambda$ in $(0,1)$.
Hence, for large enough $B$, Algorithm~\ref{alg:two_sample_test} produces a asymptotically valid two-sample test for samples of pairs of graphs.

\section{Experiments}
\label{sec:experiments}

We illustrate the construction of confidence bands and two-sample tests on synthetic data sets of pairs of graphs. For that, we consider different random graph models and combine them to create independent pairs of graphs.

\subsection{Random graph models} 

In this section, we present the models generating the random graphs, namely the Erd\H{o}s-Rényi model \cite{erdos1960evolution}, the stochastic block model \cite{holland1983stochastic}, the geometric model \cite{penrose2003random} and the Watts-Strogatz model \cite{watts1998collective}. For each, we specify the parameters used in our simulations.

\subsubsection*{Erd\H{o}s-Rényi model (ER)} This model generates random graphs where each edge appears with probability $p$, independently from all the others. It requires two parameters: $n$ the graph size and $p$ the edge probability. Because of the independence and their homogeneity, ER graphs are considered to have no structure.

In our simulations, we take $n=50$ and $p=0.5$. Weights may be added by assigning a uniform weight between 0 and 2 to each existing edge, independently from all the others.

\subsubsection*{Stochastic block model (SBM)}  This model is a generalization of the ER model that introduces a block structure. The $n$ nodes are clustered in $K$ groups $C_1, \dots , C_K$, of respective sizes $n_1, \dots, n_K$. Edges appear independently from the others, but with a probability depending on the groups: edge $\{i,j\}$ appears with probability $p_{k,l}$ when $i \in C_k$ and $j \in C_l$.

In our simulations, we take $K=2$, $n_1 = n_2 = 25$, and $p_{1,1} = p_{2,2} = 0.75$, $p_{1,2} = p_{2,1} = 0.25$. So graphs are composed of two dense clusters, with few edges between them. Similarly to the ER model, we may add random weights following the uniform distribution between 0 and 2.

\subsubsection*{Geometric model (GM)}  Given a compact domain $U$ of $\R^d$ for some $d$, a graph is generated from the GM by drawing $n$ points uniformly on $U$ and creating an edge between two points if their Euclidean distance is smaller than a given threshold. Here we choose a slight variation of this model by considering a number $p \in [0,1]$ and creating the edges corresponding to the $\lfloor p \binom{n}{2} \rfloor $ pairs of points with the smallest euclidean distances.  

In our simulations, the compact domain $U$ is either $A_\epsilon $ the annulus in $\R^2$ with outer radius 1 and inner radius $\epsilon > 0$ or $A_0$ the unit disk. We either fix $n=50$ or for each graph, $n$ is drawn from a Poisson distribution of parameter 50. We take $p=0.5$. To obtain weighted graphs, we may assign the weight $e^{-2d}$ to an edge, where $d$ is the distance between the two points forming the edge.

\subsubsection*{Watts-Strogatz model (WS)} This model is famous for its so-called \emph{small-world} property, meaning that any two nodes in the graph are usually not far apart in graph distance. Random graphs under this model are constructed as follows. First a ring of $n$ nodes is created where each node is connected to its $k$ nearest neighbors. Then, for each edge, with probability $p$ independently of all others edges, one of its terminal nodes is replaced by another node chosen uniformly at random. 

In our simulations, we take $n=50$, $k=25$ and $p=0.5$. Weights may be added by assigning a uniform weight between 0 and 2 to each existing edge, independently from all the others.
\medskip

We combine these models to generate pairs of independent graphs on which we can compute HKD and/or HPD processes. We consider the pairs of independent ER graphs (ER-ER) and the pairs containing one ER graph and one SBM graph (ER-SBM). For these distributions, the groups' composition is known and nodes are treated independently among groups. Thus, we can consider that we know an NC between graphs. As a result, we can compute both HKD and HPD processes. 
Similarly, we consider (ER-ER) and (ER-SW) pairs and compare them with HKD processes (as nodes are exchangeable). 
We also consider pairs of independent geometric random graphs: Disk-Disk and Disk-Annulus. However, as nodes in these models correspond to random points, there is no default NC. Hence, only HPD processes are computed.

\subsection{Simulation results}

In this section, we present the simulation results concerning the construction of confidence bands and the performances of the two-sample tests.

\subsubsection{Confidence bands}

In this section, we compute confidence bands under the different models of pairs of graphs defined above. For each model, we draw a sample $(G_1, G'_1), \dots, (G_N, G'_N)$ with $N=100$. We compute the mean process, that is $t \to N^{-1} \sum_i D_t((G_i,G'_i))$ or $t \to N^{-1} \sum_i H_t((G_i,G'_i))$ and compute a confidence band of level 99\% around this empirical mean using the bootstrap method detailed in Algorithm~\ref{alg:conf_band}. Computations are done with 1000 bootstrap samples. Results are shown in Figure~\ref{fig:cb_HKD_ERvsSBM}, \ref{fig:cb_HPD_ERvsSBM} and \ref{fig:cb_HPD_DiskvsAnnulus}, where solid lines represent empirical means and transparent areas represent confidence bands.

%modification
Remark that confidence bands around HKD processes (Figure~\ref{fig:cb_HKD_ERvsSBM}) seem to be narrower than those around HPD processes (Figure~\ref{fig:cb_HPD_ERvsSBM}), relatively to the height of the curves. 
Therefore, users should rather use HKD processes whenever NC's are available. Nonetheless, the versatility of HPD processes does not totally reduce their efficiency. As Figure~\ref{fig:cb_HPD_DiskvsAnnulus} indicates, HPD empirical means seem to be able to discriminate between the different distributions. These observations will be confirmed in the next section, where the performances of the two-sample tests are investigated.

\begin{figure}
\centering
\begin{subfigure}{.45\textwidth}
	\includegraphics[width=\linewidth]{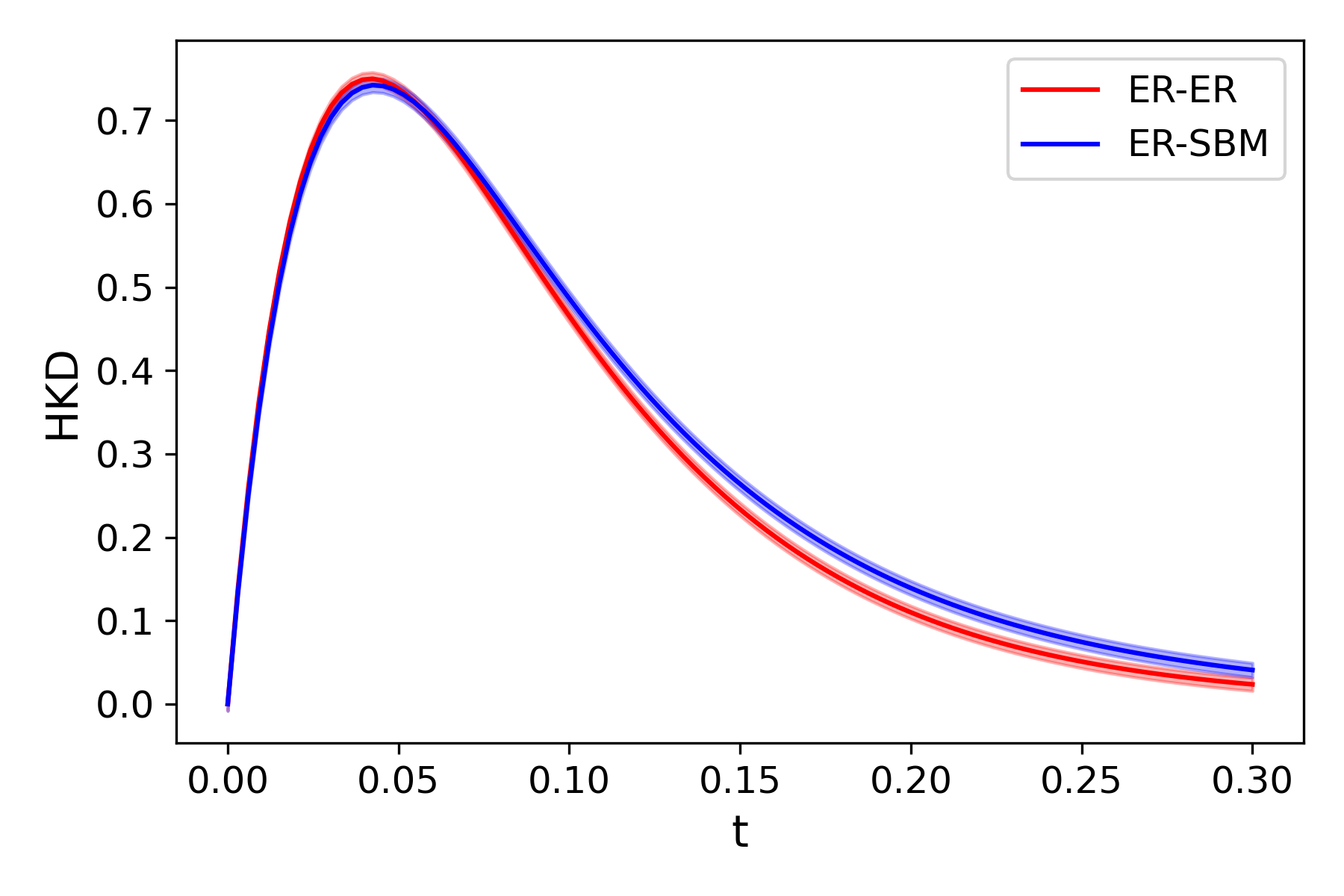}
	\caption{Unweighted.}
	\label{subfig:cb_HKD_ERvsSBM}
\end{subfigure}
\begin{subfigure}{.45\textwidth}
	\includegraphics[width=\linewidth]{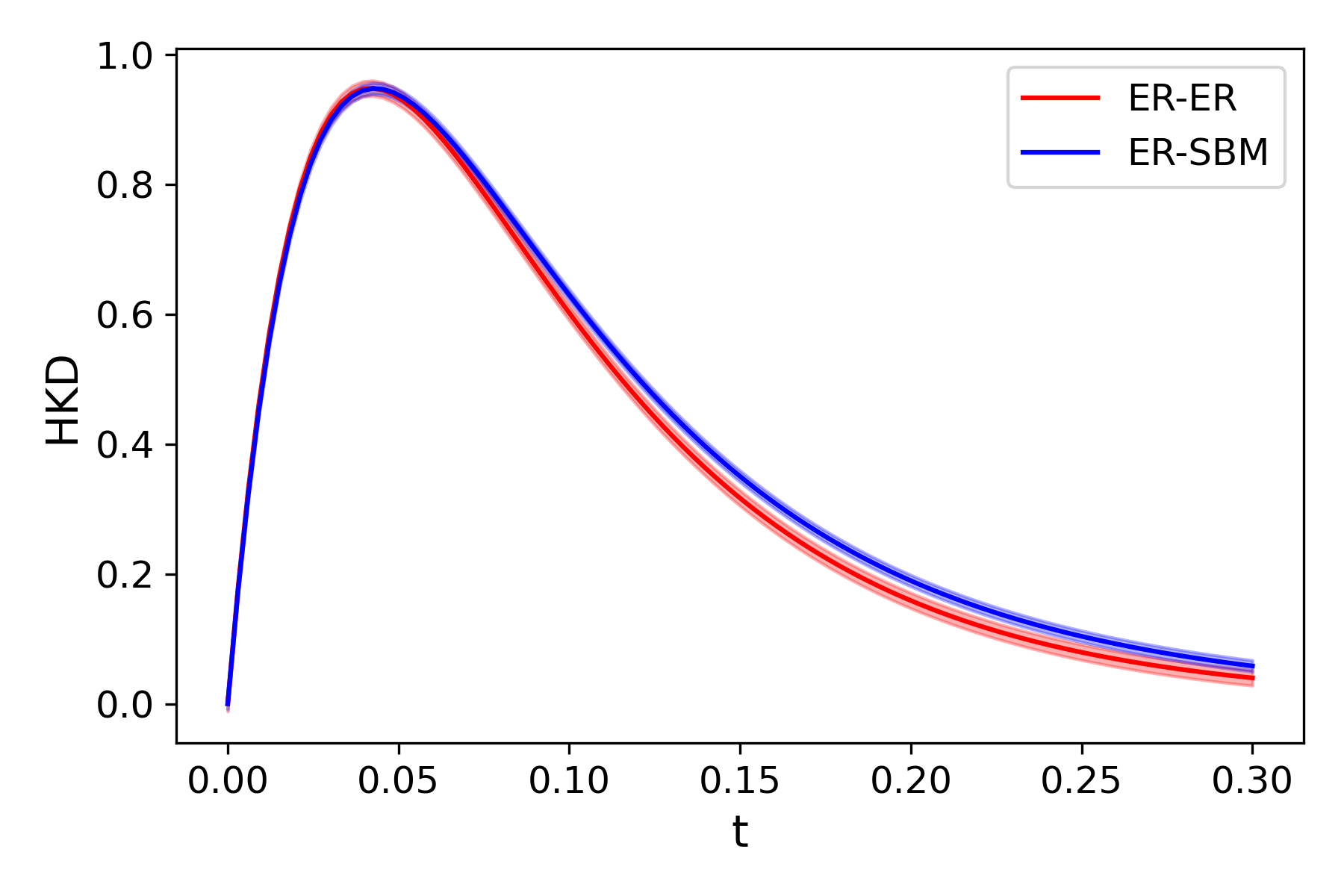}
	\caption{Weighted.}
	\label{subfig:cb_HKD_ERvsSBM_w}
\end{subfigure}
\caption{Confidence band around the mean HKD processes with ER-ER (red) and ER-SBM (blue) distributions.}
\label{fig:cb_HKD_ERvsSBM}
\end{figure}

\begin{figure}
\centering
\includegraphics[width=0.45\linewidth]{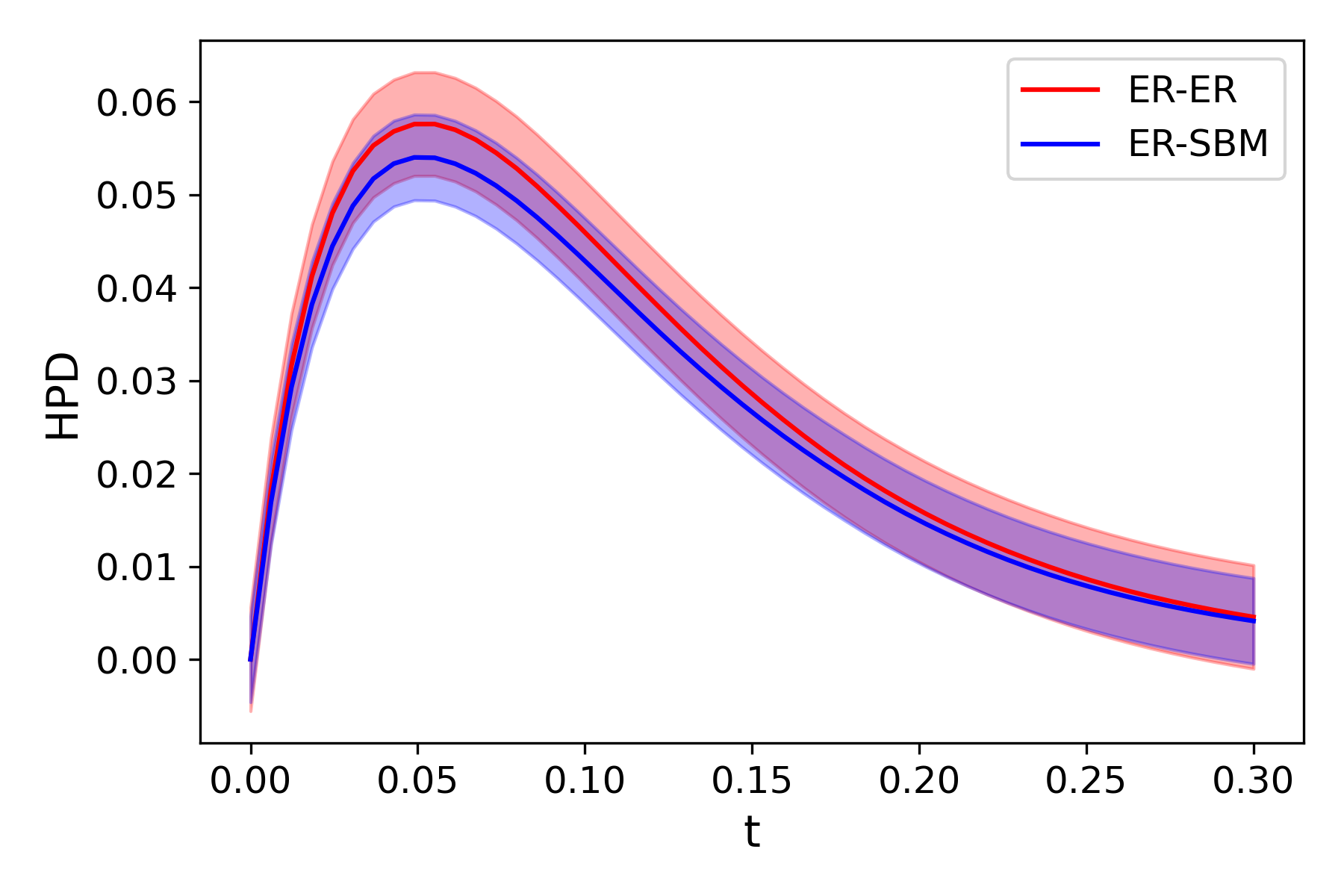}
\caption{Confidence band around the mean HPD processes with unweighted ER-ER (red) and ER-SBM (blue) distributions.}
\label{fig:cb_HPD_ERvsSBM}
\end{figure}

\begin{figure}
\centering
\begin{subfigure}{.32\textwidth}
	\includegraphics[width=\linewidth]{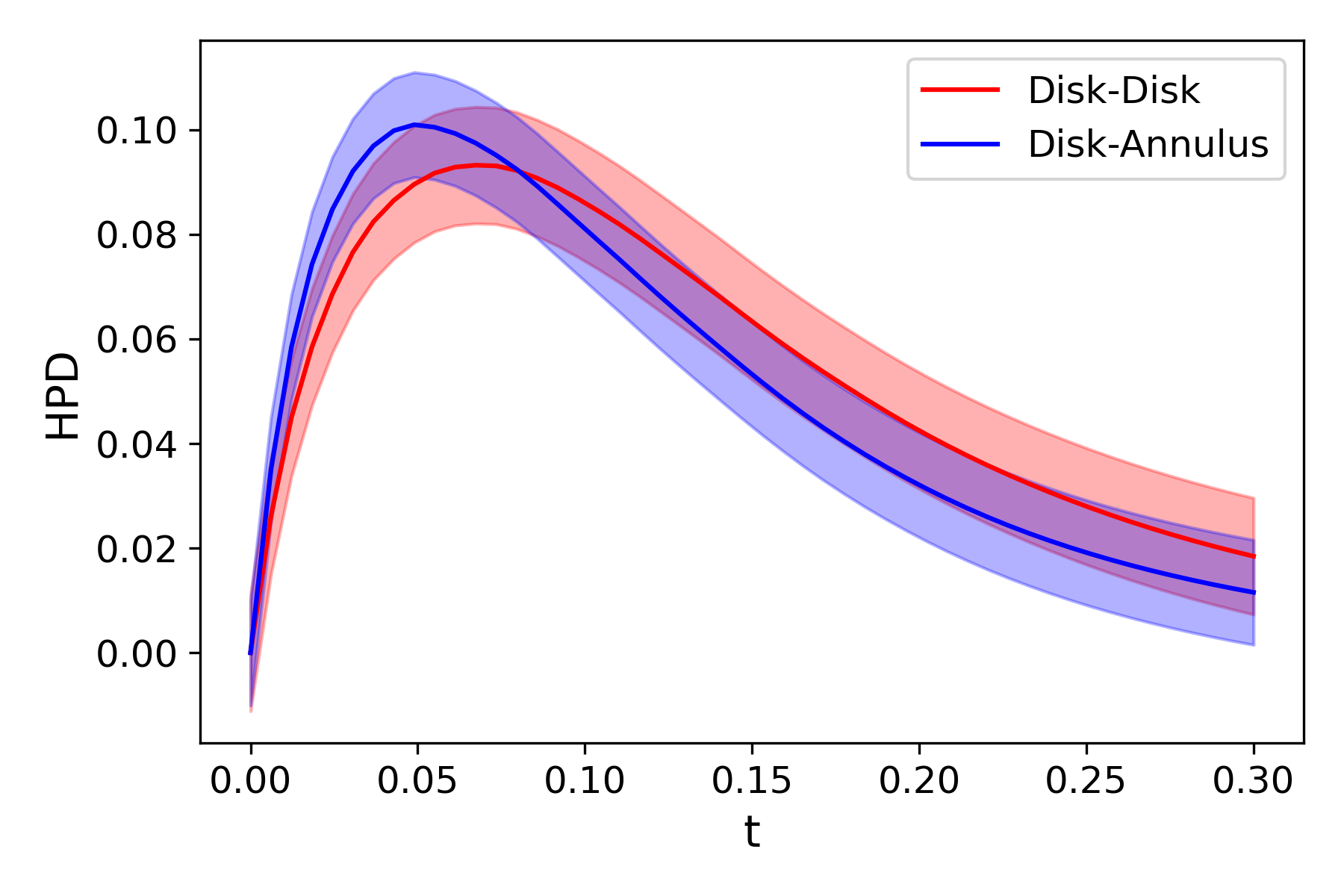}
	\caption{Unweighted, \\ fixed size.}
	\label{subfig:cb_HPD_DiskvsAnnulus}
\end{subfigure}
\begin{subfigure}{.32\textwidth}
	\includegraphics[width=\linewidth]{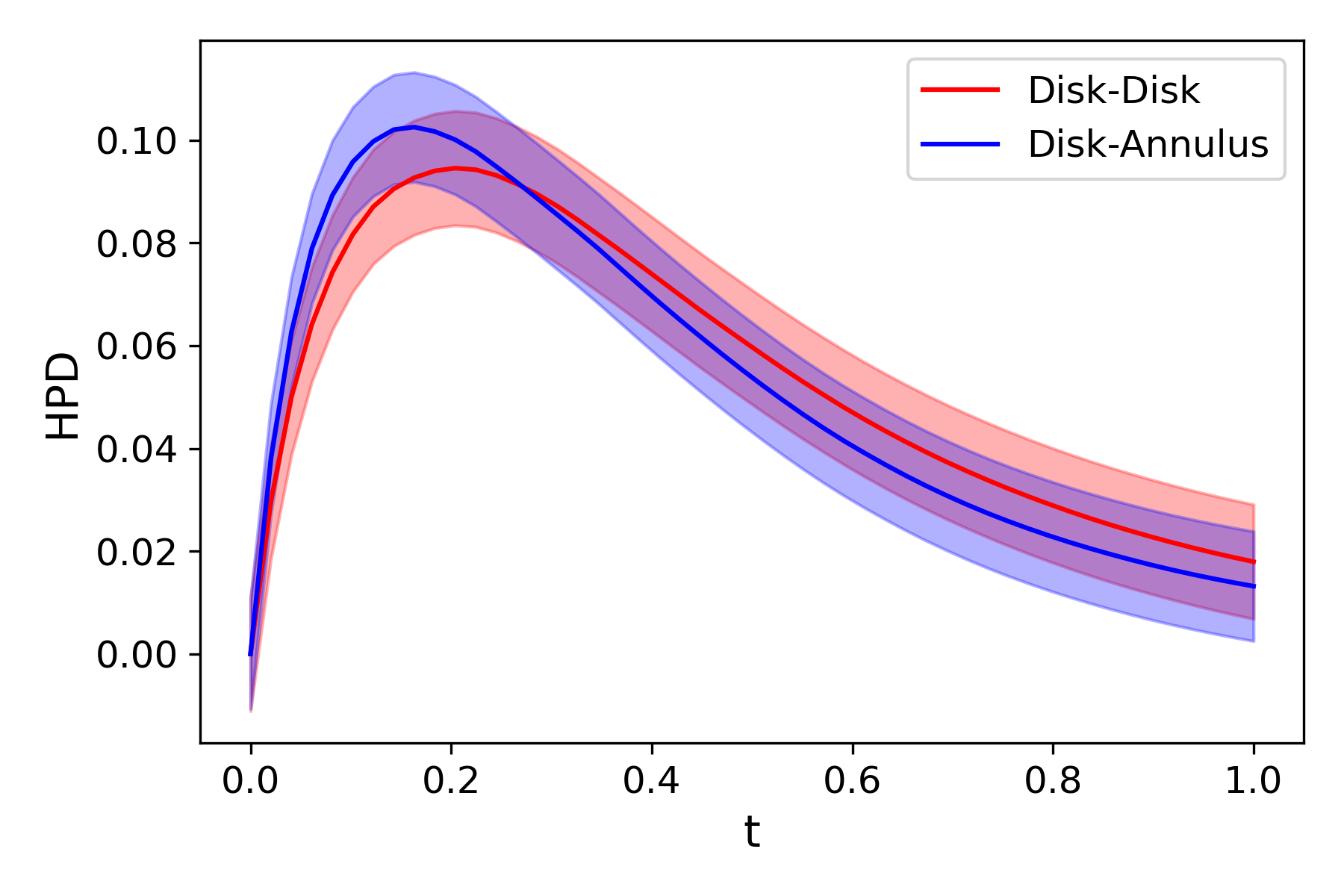}
	\caption{Weighted, \\ fixed size.}
	\label{subfig:cb_HPD_DiskvsAnnulus_w}
\end{subfigure}
\begin{subfigure}{.32\textwidth}
	\includegraphics[width=\linewidth]{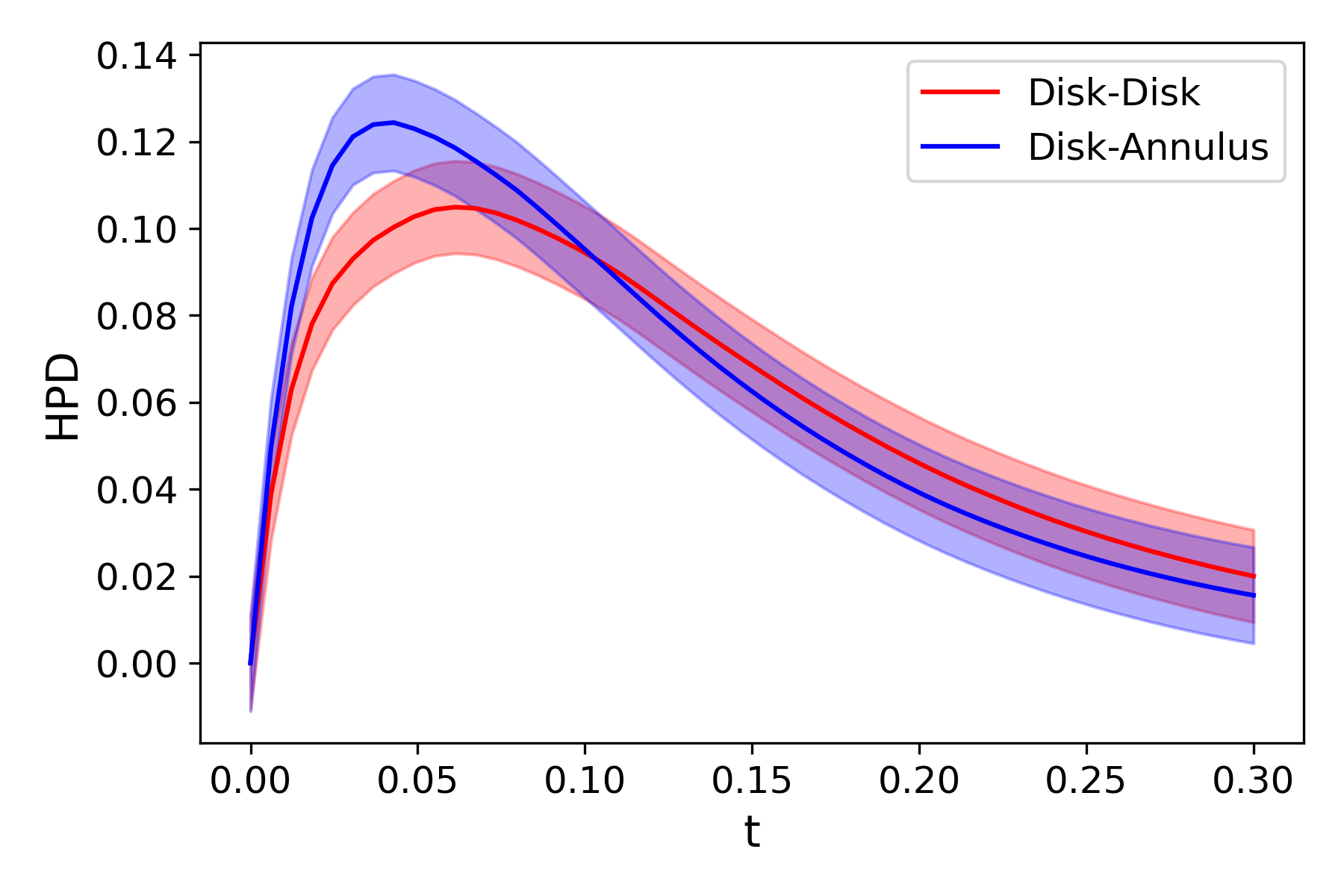}
	\caption{Unweighted, \\ random size.}
	\label{subfig:cb_HPD_DiskvsAnnulus_rdm}
\end{subfigure}
\caption{Confidence band around the mean HPD processes with Disk-Disk (red) and Disk-Annulus (blue) distributions.}
\label{fig:cb_HPD_DiskvsAnnulus}
\end{figure}

\subsubsection{Two-sample tests}

\paragraph*{Levels and Powers.} Simulations are run to evaluate the performances of the two-sample tests using HKD and HPD processes as detailed in Algorithm~\ref{alg:two_sample_test}, see Figure~\ref{fig:test_HKD}, Figure~\ref{fig:test_HPD} and Figure~\ref{fig:test_HPD_rdm_size}. In all tests, the desired level is set to 0.05, and computations are done with 1000 bootstrap samples. Figures~\ref{subfig:test_HKD_typeI} and \ref{subfig:test_HPD_typeI} illustrate that the asymptotic levels of the tests correspond to the set level. On the other hand, Figures~\ref{subfig:test_HKD_power}, \ref{subfig:test_HPD_power} and \ref{fig:test_HPD_rdm_size} illustrate that the powers tend to 1 when sample sizes increase, indicating that the tests manage to distinguish between the different distributions. Moreover, Figure~\ref{subfig:test_HPD_power} and \ref{fig:test_HPD_rdm_size} show the good performances of the HPD-based test even when NC is unknown and when graphs have different sizes. 

\begin{figure}
\centering
\begin{subfigure}{.45\textwidth}
	\includegraphics[width=\linewidth]{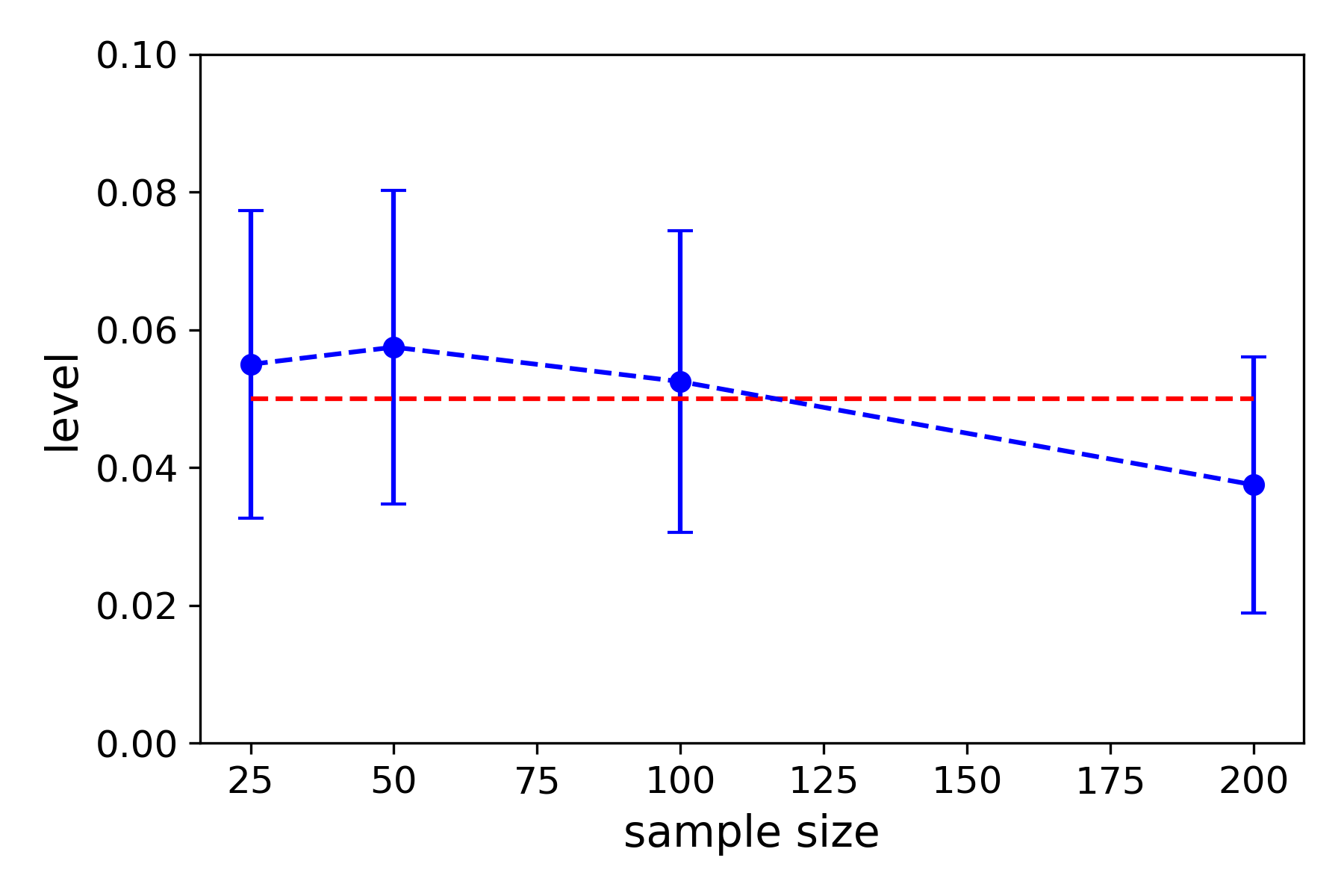}
	\caption{Levels estimations.}
	\label{subfig:test_HKD_typeI}
\end{subfigure}
\begin{subfigure}{.45\textwidth}
	\includegraphics[width=\linewidth]{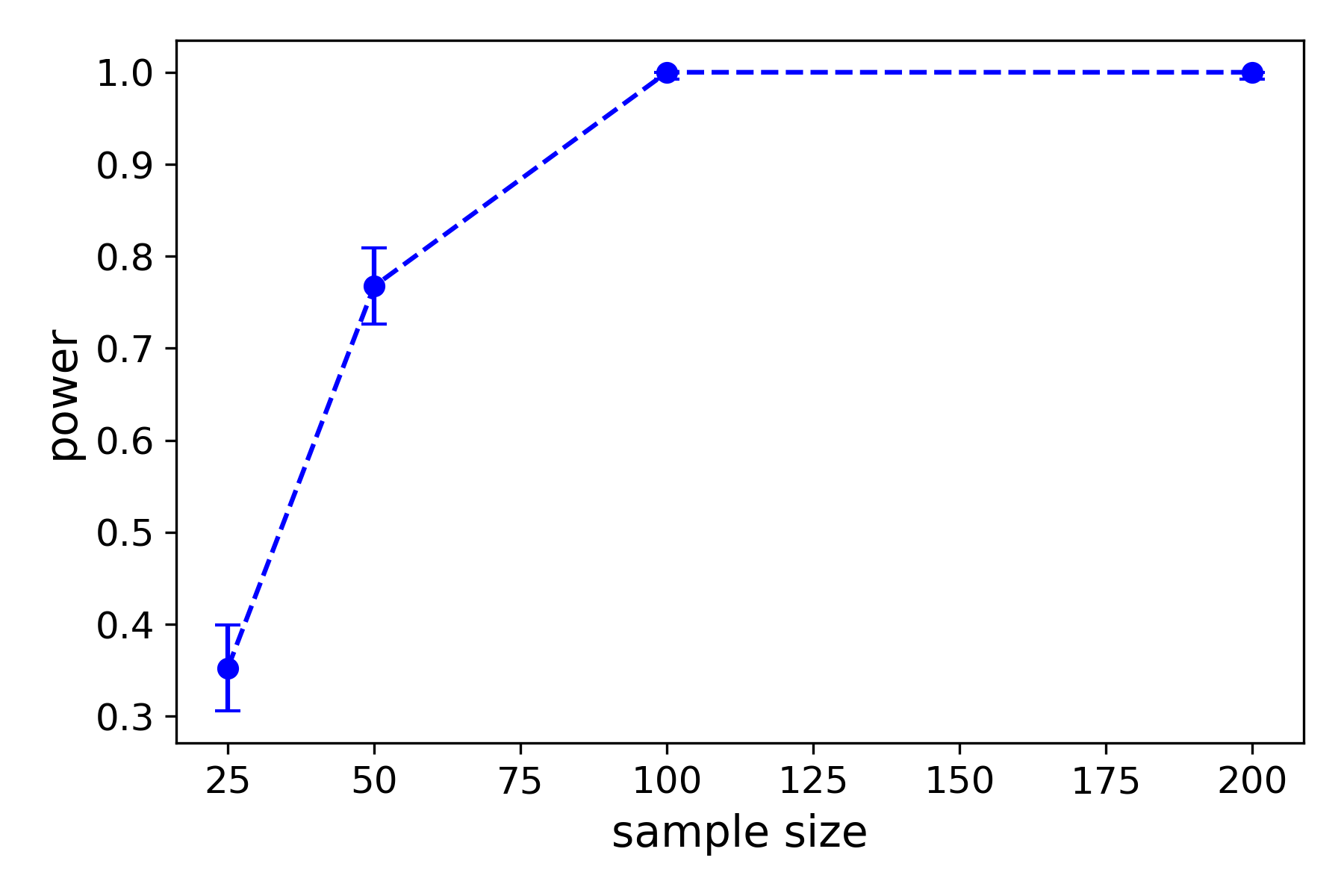}
	\caption{Powers estimations.}
	\label{subfig:test_HKD_power}
\end{subfigure}
\caption{Performances of the two-sample test using HKD processes. The level estimations are made with weighted ER-ER distributions, while the power estimations are made with weighted ER-ER and ER-SBM distributions. The samples size varies in [25,50,100,200]. Estimations are done by repeating 400 independent tests. Vertical lines represent $95\%$-confidence intervals of the estimations.}
\label{fig:test_HKD}
\end{figure}

\begin{figure}
\centering
\begin{subfigure}{.45\textwidth}
	\includegraphics[width=\linewidth]{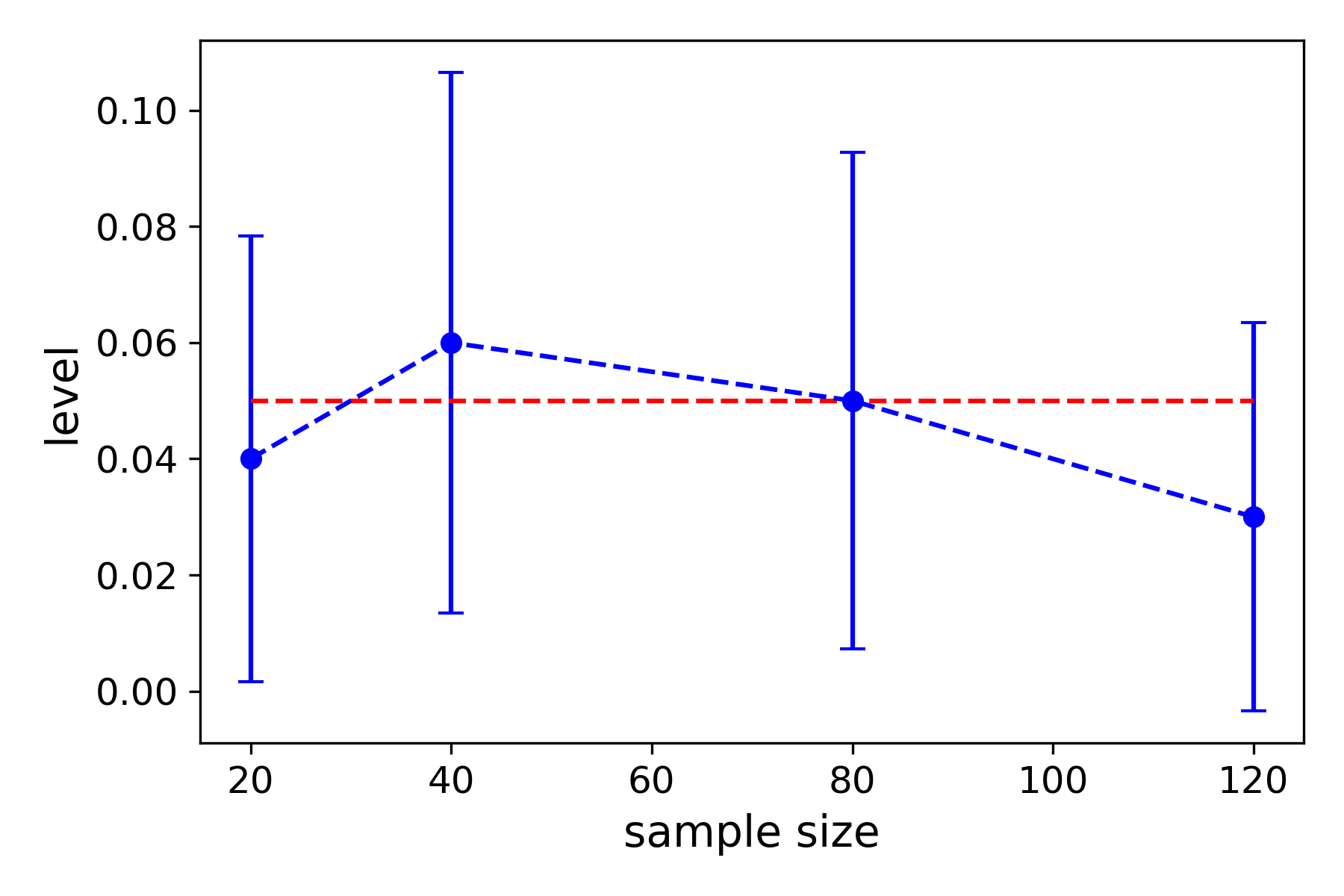}
	\caption{Levels estimations.}
	\label{subfig:test_HPD_typeI}
\end{subfigure}
\begin{subfigure}{.45\textwidth}
	\includegraphics[width=\linewidth]{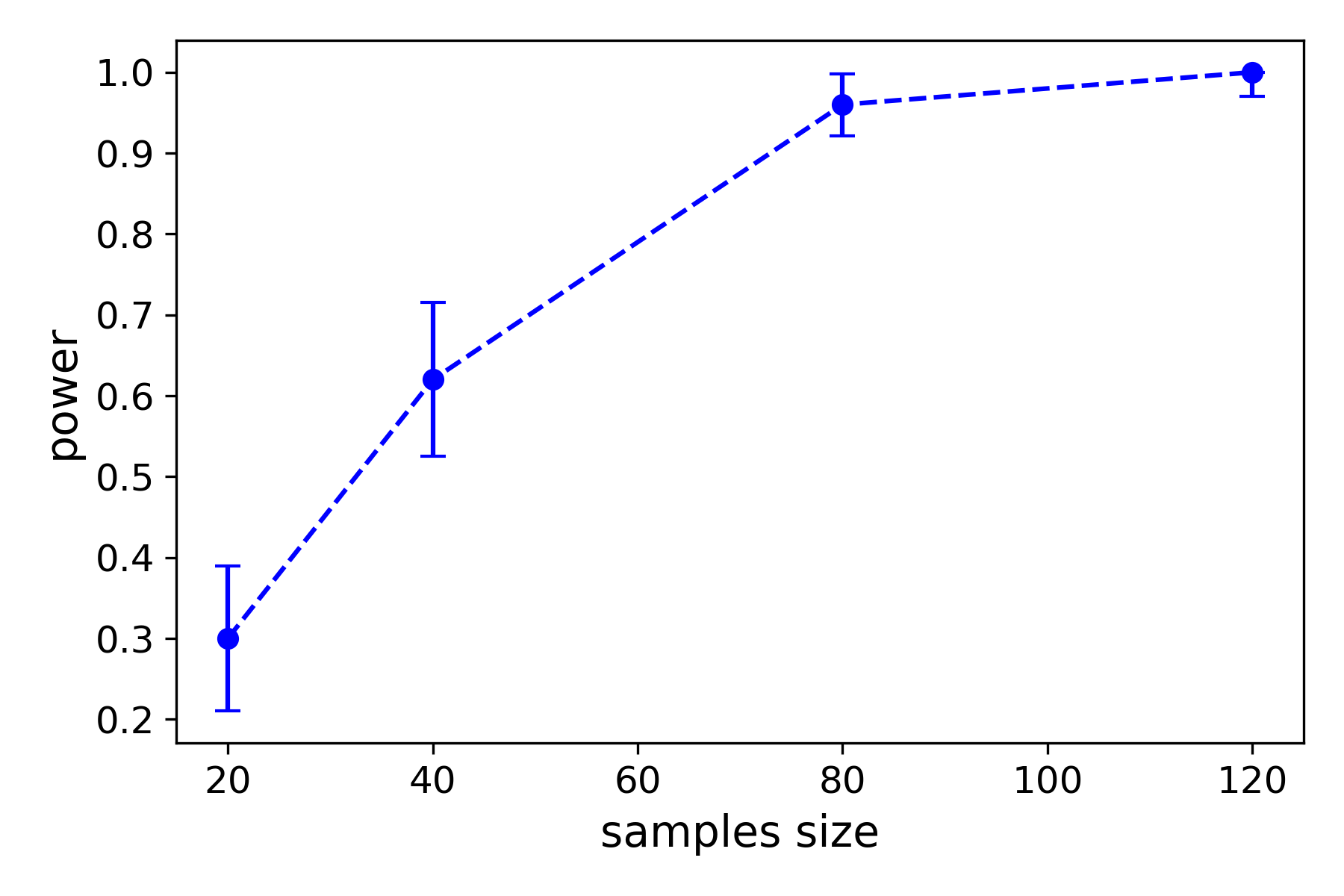}
	\caption{Powers estimations.}
	\label{subfig:test_HPD_power}
\end{subfigure}
\caption{Performances of the two-sample test using HPD processes. The level estimations are made with Disk-Disk distributions, while the power estimations are made with Disk-Disk and Disk-Annulus distributions. Graphs are unweighted and with fixed size. The samples size varies in [20,40,80,120]. Estimations are done by repeating 100 independent tests. Vertical lines represent $95\%$-confidence intervals of the estimations. }
\label{fig:test_HPD}
\end{figure}

\begin{figure}
\centering
\includegraphics[width=0.5\linewidth]{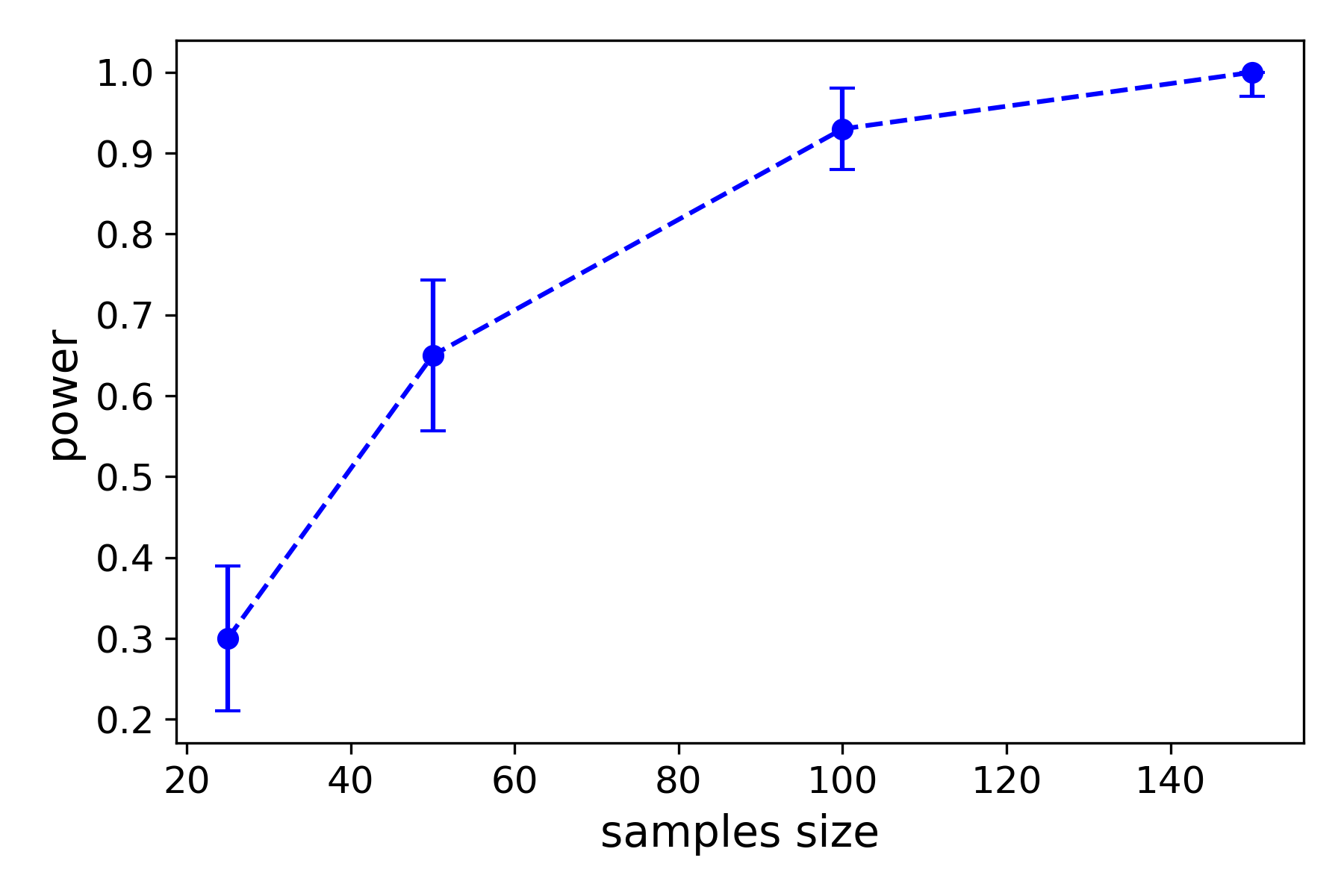}
\caption{Power of the two-sample test using HPD processes with graphs of random sizes. Power estimations are made with Disk-Disk and Disk-Annulus distributions. Graphs are unweighted and with Poissonian sizes. The samples size varies in [25,50,100,150]. Estimations are done by repeating 100 independent tests. Vertical lines represent $95\%$-confidence intervals of the estimations.}
\label{fig:test_HPD_rdm_size}
\end{figure}

\paragraph*{Comparison with other distances.} We first propose to compare our HKD-based two-sample test with tests based on other distances. We consider the \textit{Graph Diffusion Distance} (GDD) of \cite{hammond2013graph} and the comparison of graphs based on the Frobenius norm of the difference of their adjacency matrices. As these distances return real values and not processes anymore, we compare samples of pairs of graphs through the mean distance of each sample and compute a rejection threshold using a standard bootstrap method. The result of these comparisons are presented in Figure~\ref{fig:test_HKD_vs_others}.
In Figure~\ref{subfig:HKD_ERSBMw}, we consider distributions of pairs of ER-ER vs pairs of ER-SBM. In that case, both the adjacency-based and GDD-based test are unable to detect that graph distributions are different, while the HKD-based test reaches a power of 1 for samples of size greater than 100. 
In Figure~\ref{subfig:HKD_ERWSw}, we evaluate the tests on pairs of ER-ER vs pairs of ER-WS. In this setting, only the adjacency-based test fails to distinguish between the two distribution, while the HKD-based and GDD-based test have similar powers.  

It is particularly interesting to witness the power difference (and similarity) between the tests with GDD and HKD. Essentially, the GDD quickly discards the multi-scale information by taking the maximum of each HKD process. On the other hand, the HKD-based test conserves it until it compares the two mean processes. Figure~\ref{subfig:HKD_ERSBMw} illustrates that the multi-scale information contained in the HKD-processes can actually be relevant, and that it is worth conserving it to compare samples of graphs. 
However, from Figure~\ref{subfig:HKD_ERWSw}, we see that this additional multi-scale information is not always needed. A possible explanation could be that when considering SBM graphs, the density of edges is heterogeneous. For example, the density of edges in the whole graph is different compared to the one inside each block. Thus, heat diffusion may reflect that heterogeneity. To give the intuition, at the begining of the diffusion process, heat essentially diffuses inside a dense block, then it diffuses to the rest of the graph more slowly as the edge density is smaller. Thus, capturing these two phases of the heat diffusion can be meaningful. However, in the ER and WS models, graphs are essentially homogeneous. In this case, multi-scale information might be less valuable.\\

We then proceed by comparing our HPD-based two-sample test with tests based on graph kernels and the \emph{Maximum Mean Discrepancy} (MMD) \cite{gretton2012kernel}. We apply them on graphs with different sizes.  We consider the following graph kernels: 
\begin{tabular}{cccll}
$\bullet$ & GS & - & graphlet sampling & \cite{shervashidze2009efficient} \\
$\bullet$ & WL & - & Weisfeiler-Lehman & \cite{shervashidze2011weisfeiler} \\
$\bullet$ & SP & - & shortest path & \cite{borgwardt2005shortest} \\
$\bullet$ & VH & - & vertex histogram & \cite{sugiyama2015halting} \\
$\bullet$ & ST & - & SVM Theta & \cite{johansson2014global} \\
$\bullet$ & NH & - & neighborhood Hash & \cite{hido2009linear} \\
$\bullet$ & RW & - & random walk & \cite{sugiyama2015halting}.
\end{tabular}
Kernel computations were done using the Grakel python library \cite{siglidis2020grakel}. 

In this set of experiments, we only consider unweighted graphs as most of these kernels do not handle edge weights. For kernels requiring vertex labels, we choose to provide the degree of a vertex as its label. Moreover, MMD-based tests essentially perform a test on the mean of each sample seen in the corresponding RKHS. Therefore, they are not designed to handle sample of pairs of graphs. As a consequence let us give a description of the experimental procedure. We want to evaluate how well two-sample tests distinguish between two different graph distributions $P$ and $Q$. HPD-based test will be applied to samples of independent pairs $P$-$P$ and $P$-$Q$ while MMD-based tests are computed with samples of graphs from $P$ and from $Q$. 
Figure~\ref{subfig:HPD_DiskAnnulus} displays the powers the various tests for $P$ being the Disk distribution and $Q$ the Annulus distribution, each with Poissonian graph sizes. The HPD-based test achieve good performance for sample sizes greater than 100 pairs of graphs, but it is out-performed by the tests based on the graphlet sampling, Weisfeiler-Lehman, vertex histogram and SVM Theta kernels. Moreover, the three other kernel-based tests are unable to distinguish between $P$ and $Q$. Note that among the successful kernels, the WF and ST require labels and thus their tests are actually based on degree distributions, which are different in $P$ and $Q$.  On Figure~\ref{subfig:HPD_ERSBM}, the test powers are given for $P$ being the ER distribution and $Q$ the SBM one, each with Poissonian graph sizes. Here, only the GS and ST methods successfully distinguish between $P$ and $Q$, all other methods fail. 

\begin{figure}
\centering
\begin{subfigure}{0.45\linewidth}
	\includegraphics[width=\linewidth]{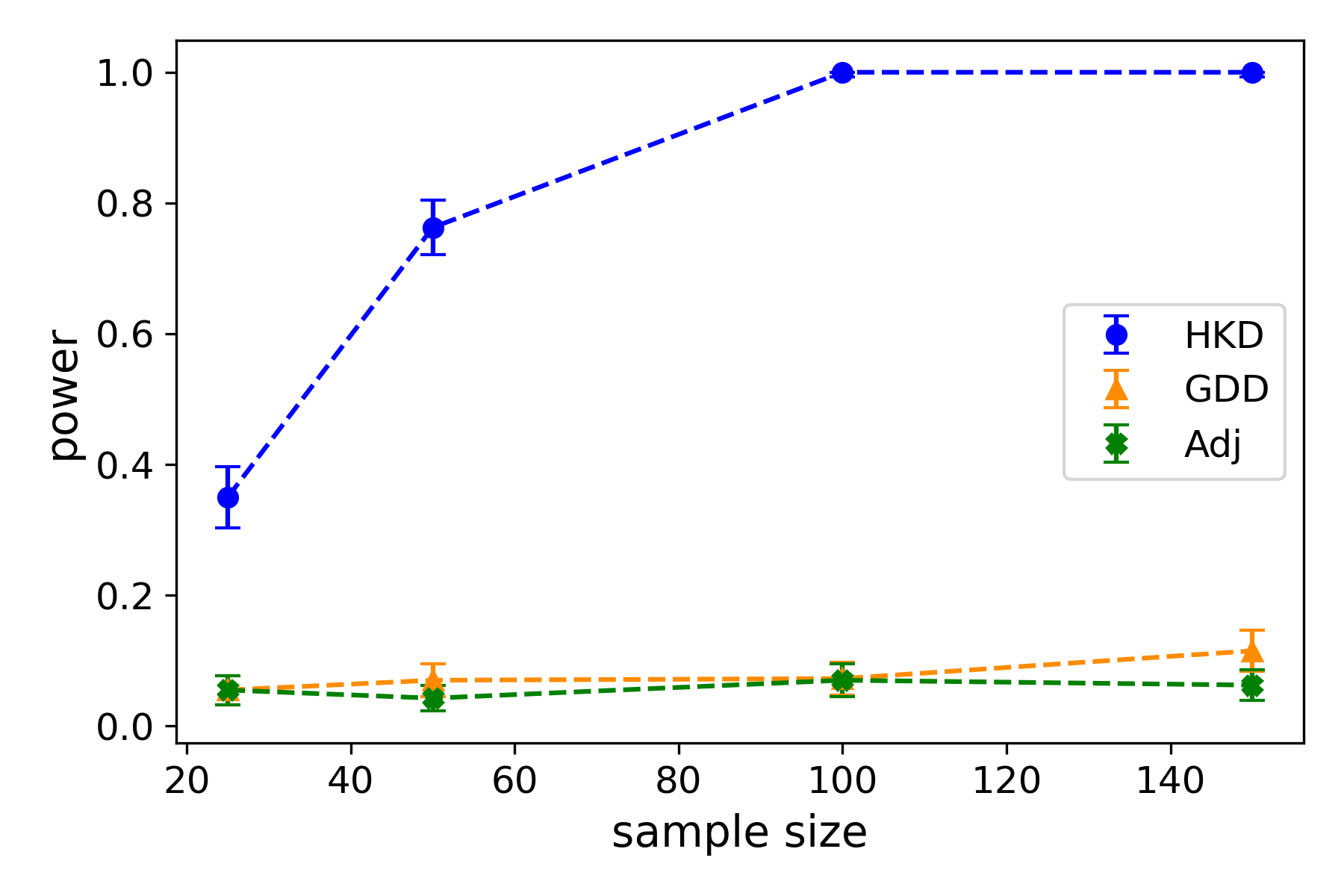}
	\caption{ER vs SBM (weighted)}
	\label{subfig:HKD_ERSBMw}
\end{subfigure}
\begin{subfigure}{0.45\linewidth}
	\includegraphics[width=\linewidth]{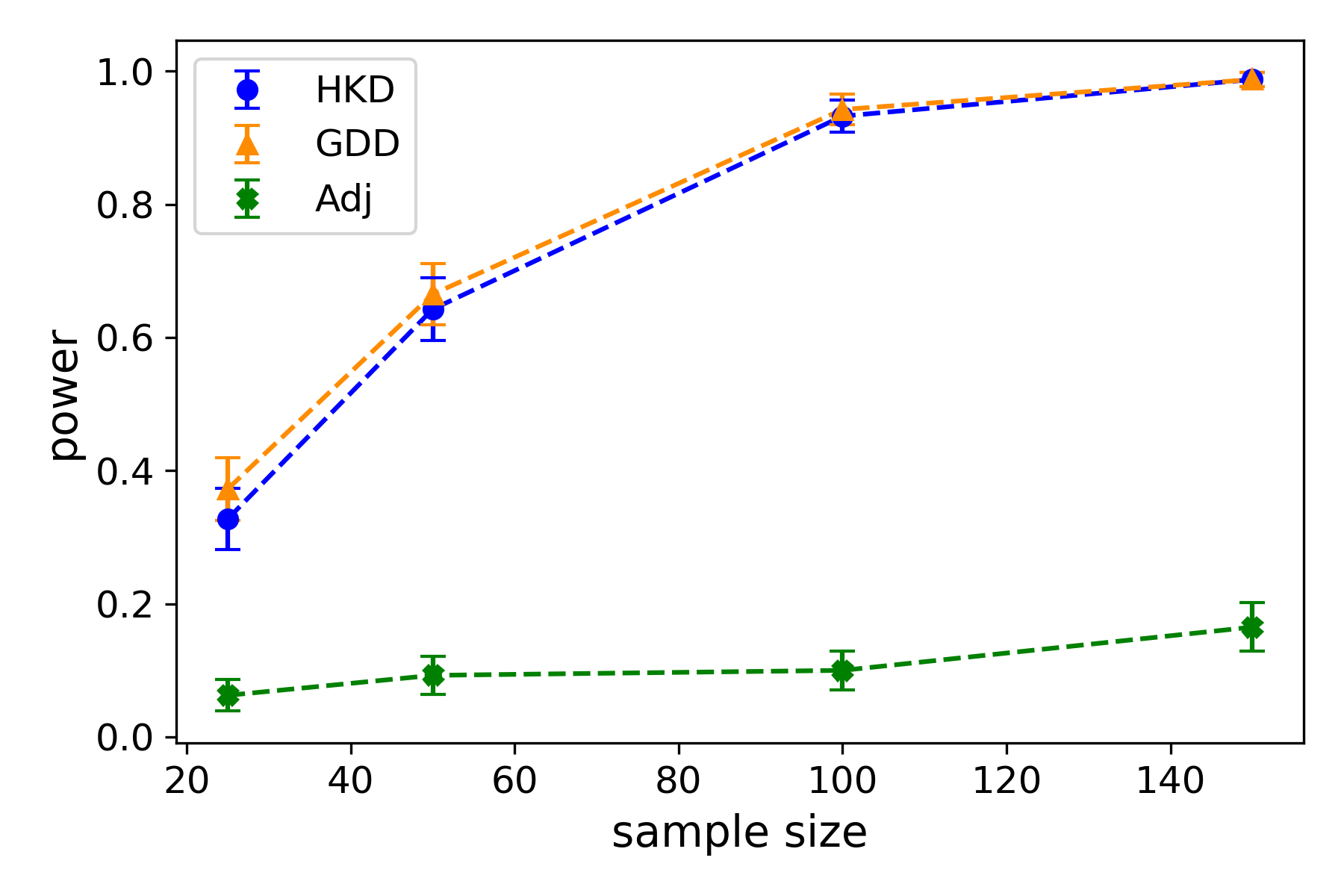}
	\caption{ER vs WS (weighted)}
	\label{subfig:HKD_ERWSw}
\end{subfigure}
\caption{Powers of the HKD-based test, the GDD-based test and the adjacency-based test. Tests are performed either under weighted ER-ER and ER-SBM distributions (left) or weighted ER-ER and ER-WS distributions (right). The samples size varies in [25,50,100,150]. Estimations are done by repeating 400 independent tests. Vertical lines represent $95\%$-confidence intervals of the estimations.}
\label{fig:test_HKD_vs_others}
\end{figure}

\begin{figure}
\centering
\begin{subfigure}{0.45\linewidth}
	\includegraphics[width=\linewidth]{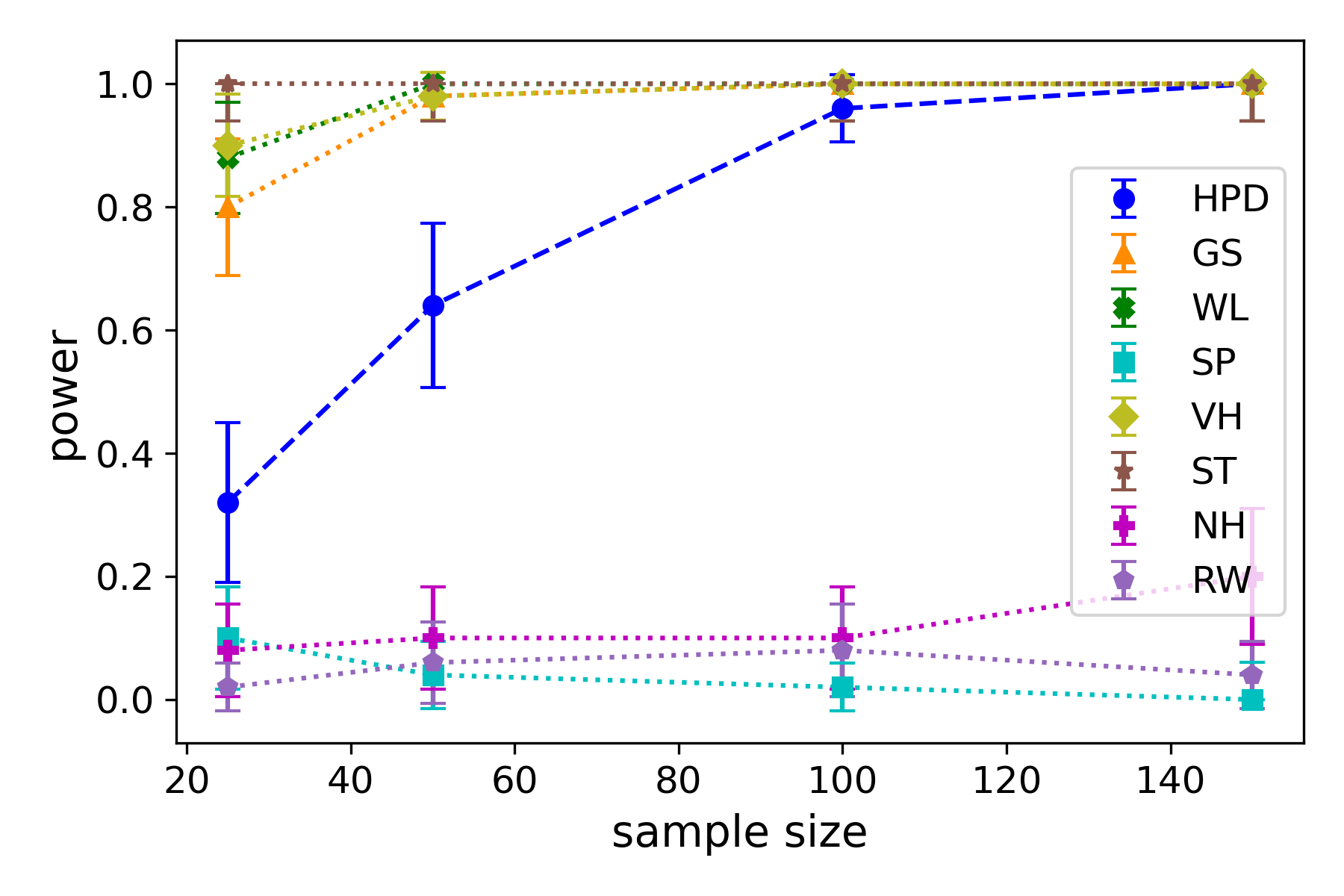}
	\caption{Disk vs Annulus (random size)}
	\label{subfig:HPD_DiskAnnulus}
\end{subfigure}
\begin{subfigure}{0.45\linewidth}
	\includegraphics[width=\linewidth]{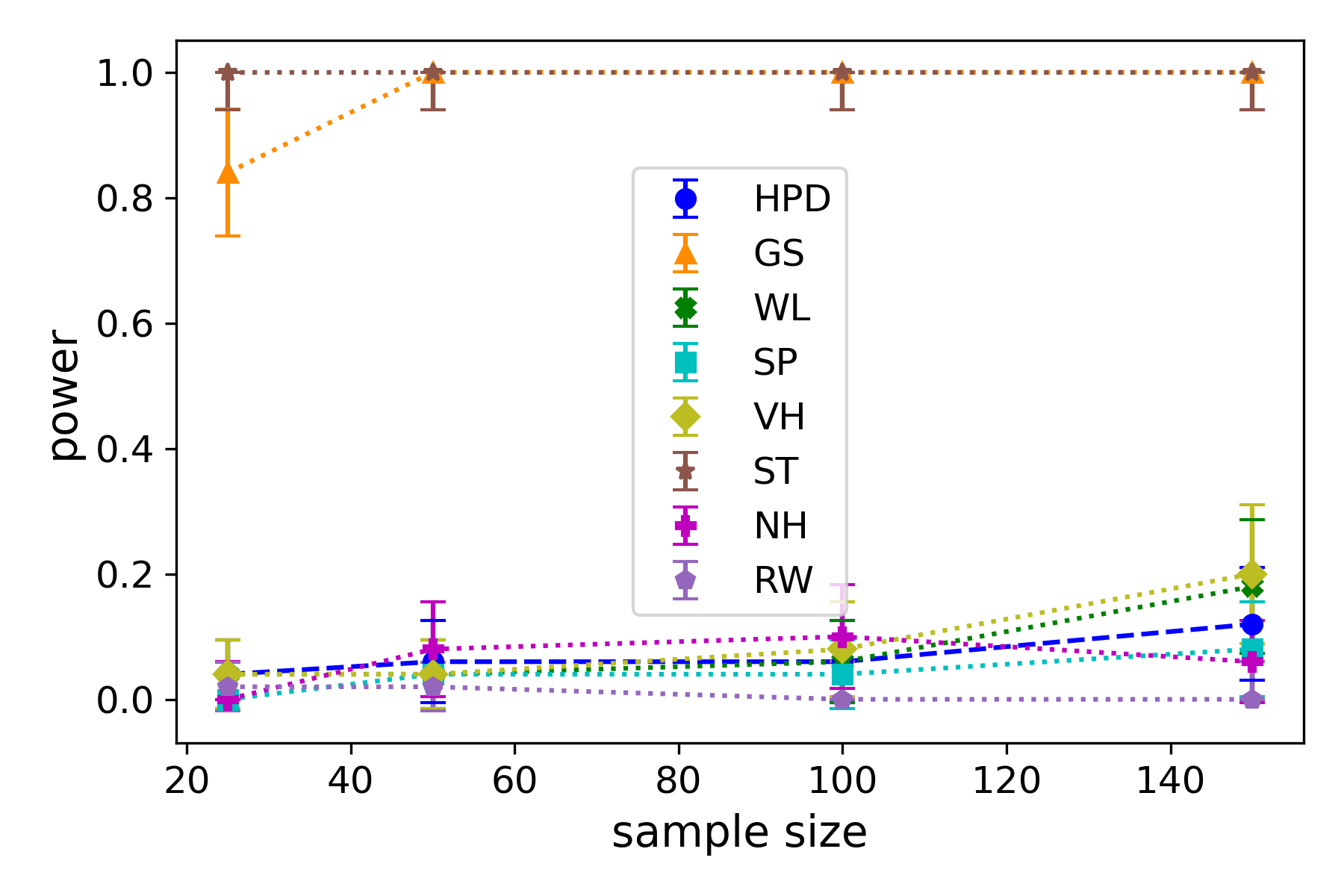}
	\caption{ER vs SBM (random size)}
	\label{subfig:HPD_ERSBM}
\end{subfigure}
\caption{Powers of the HPD-based test and MMD tests for various graph kernels. Tests are performed either under Disk-Disk and Disk-Annulus distributions with graphs of random sizes (left) or ER-ER and ER-SBM distributions with graphs of random sizes (right). The samples size varies in [25,50,100,150]. Estimations are done by repeating 50 independent tests. Vertical lines represent $95\%$-confidence intervals of the estimations.}
\label{fig:test_HPD_vs_others}
\end{figure}

\paragraph*{Comparison with the Neyman-Pearson Test.}

The Neyman-Pearson test \cite{neyman1933ix} is an optimal testing procedure, in the sense that it is the test with the highest power for a given level. But being based on likelihood ratios, it rarely is computable. Its performances are determined by the total variation distance (TV) between the two distributions. If we consider the case of distributions that depend on the sample size, the speed at which their TV tends to 0 determines if the Neyman-Pearson test will asymptotically be able to distinguish between the two distributions.
For ER models, independence of the edges allows to theoretically compute bounds of the TV and determine the phase transition. Let us take a real number $p$ such that $0<p<1$. And for each sample size $N \geq 1$ consider the parameters $p_0(N)$ and $p_1(N)$, such that both converge to $p$. Following \cite[Section 13.1.]{lehmann2006testing}, we can show that as long as $\abs{p_0(N) - p_1(N)} \gg N^{-1/2}$, the Neyman-Pearson test asymptotically distinguishes between the distributions of independent pairs of $\ER(n,p_0(N))$ and independent pairs of $\ER(n,p_1(N))$ when using $N$-samples.
Figure~\ref{fig:HKD_test_ER_neyman_pearson} shows that for large enough sample sizes, our two-sample test based on HKD processes distinguishes between ER models up to $\abs{p_0(N) - p_1(N)} = C \log N / \sqrt{N}$, with $C=0.01$. The tests are computed with a set level of 0.05 and with 1000 bootstrap samples.

\begin{figure}
\centering
\includegraphics[width=0.5\linewidth]{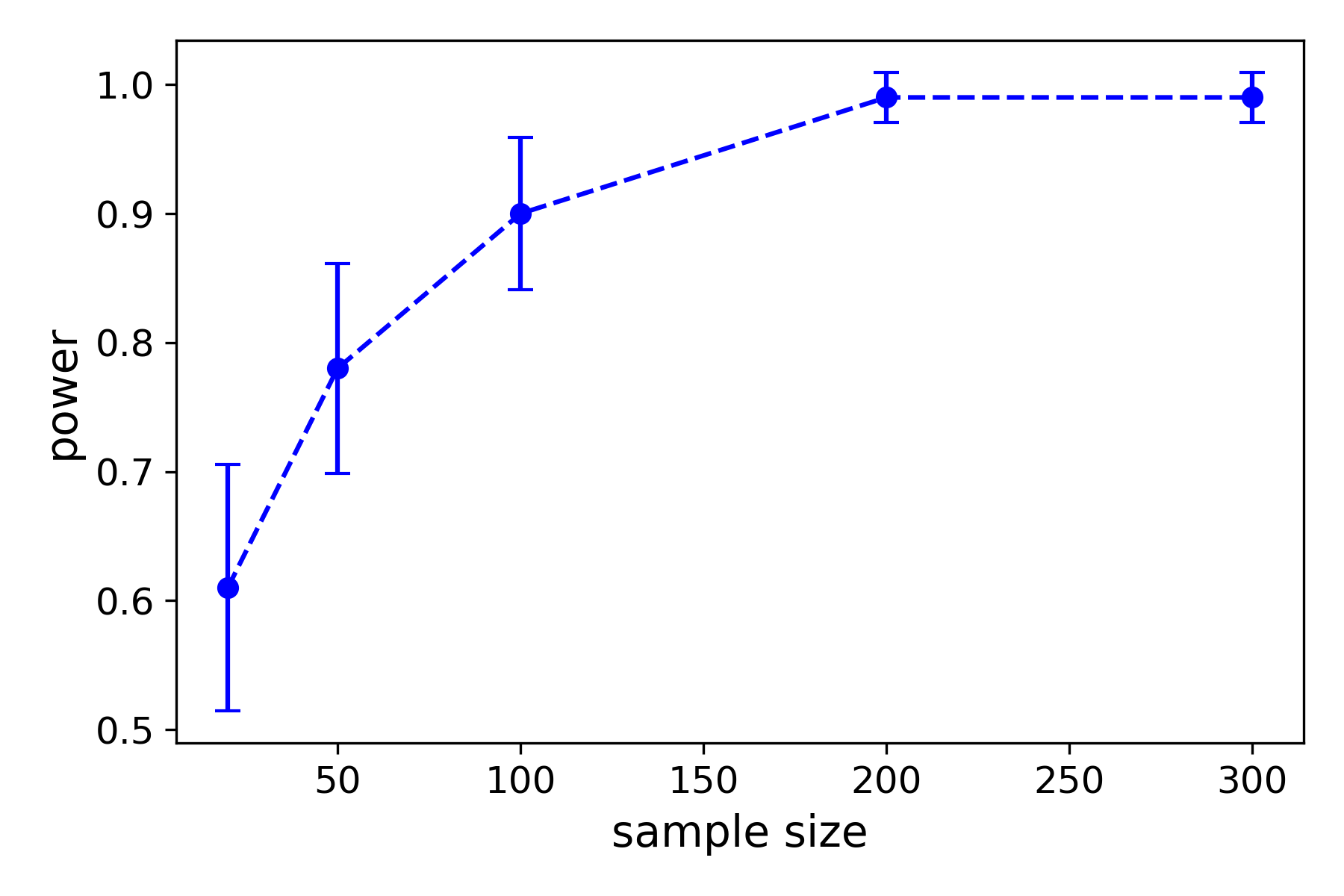}
\caption{Evolution of the power of the HKD two-sample test when $\abs{p_0(N) - p_1(N)} = C \log N / \sqrt{N}$. The samples size $N$ varies in [20,50,100,200,300]. Estimations are done by repeating 100 independent tests. Vertical lines represent $95\%$-confidence power intervals of the estimations. }
\label{fig:HKD_test_ER_neyman_pearson}
\end{figure}

%modification
\subsection{Computational complexity}
Both Algorithm~\ref{alg:conf_band} and Algorithm~\ref{alg:two_sample_test} require to compute eigenvalues and eigenvectors decompositions of Laplacian matrices in order to obtain heat diffusion distance processes. For symmetric matrices of size $n \times n$, the computational complexity of \textit{divide-and-conquer} algorithms to compute such decomposition is, in the worst case, in $O(n^3)$ operations.  (see Table~3.1 of \cite{nakatsukasa2013stable}).

%modification
In our experiments, we restricted graph sizes to around 50 vertices. This is because we needed to repeat the tests several hundred times to estimate the levels and powers with statistical significance. However, single tests with graphs of a few hundreds or even thousands of vertices could be considered. 

%modifiction
In the case where graphs would be too large for eigendecompositions, strategies to approximate the heat kernel could be developed. Previous works used Taylor expansion, Chebychev approximation, or low frequency approximation \cite{tsitsulin2018netlsd, marcotte2022fast}.

\section{Conclusion}

We proposed two multiscale comparisons of graphs using heat diffusion processes, namely the HKD and HPD. The first one requires the assumption of equal graph sizes and a known NC, while the second one is free of these assumptions. The multiscale approach solves the problem of choosing an informative diffusion time. We proposed to use these processes to analyze data sets of pairs of graphs and were able to design consistent confidence bands and two-sample tests. The methods are supported by theoretical results: the HKD and HPD families are Donsker, meaning that the processes verify a functional central limit theorem. Moreover, the processes admit Gaussian approximations with rates that are independent of the graph sizes. These results are very general and could easily be applied to other processes. Essentially, the processes are required to be uniformly bounded and Lipschitz-continuous. Moreover, the performances of our methods were illustrated by simulations on synthetic data sets. We showed that the two-sample tests were able to distinguish between Erd\H{o}s-Rényi and SBM graphs, as well as between geometric graphs sampled on different domains. Moreover, we confirmed the benefit of using multi-scale notion of distances, as the HKD and HPD processes, over other notions of distances, especially when graphs have heterogeneous densities. On Erd\H{o}s-Rényi models with parameters depending on the sample size, the tests were still distinguishing between the different distributions, even when working close to the phase transition of the Neyman-Pearson test.   \medskip

As future work, we would like to extend these methods to be able to perform learning tasks, e.g., clustering, classification, or change point detection on graph data. Extensions to be able to deal with data sets of graphs by opposition to pairs of graphs should also be developed to broaden the application spectrum. 
%modification
Transferring our approach to directed graphs could also be interesting. Although technical difficulties related to spectral decomposition of non-symmetric matrices would need to be dealt with.
On the theoretical side, studying the interplay between graph sizes and sample sizes could be a first step toward non-asymptotic methods to analyze data sets of graphs. \medskip

Nonetheless, we believe that the introduction of the HKD and HPD processes has the potential to bring innovative and statistically founded ways to analyze data sets of graphs. Moreover, the theoretical results presented in this work being very general, our methods could be extended to other fields.

%%%%%%%%%%%%%%%%%%%%%%%%%%%%%%%%%%%%%%%%%%%%%%
%% Single Appendix:                         %%
%%%%%%%%%%%%%%%%%%%%%%%%%%%%%%%%%%%%%%%%%%%%%%
%\begin{appendix}
%\section*{???}%% if no title is needed, leave empty \section*{}.
%\end{appendix}
%%%%%%%%%%%%%%%%%%%%%%%%%%%%%%%%%%%%%%%%%%%%%%
%% Multiple Appendixes:                     %%
%%%%%%%%%%%%%%%%%%%%%%%%%%%%%%%%%%%%%%%%%%%%%%
\section*{Appendix}

\begin{appendix}

\section{Proof of Theorem~\ref{theo:donsker}}
\label{sec:proof_donsker}

We start by proving the Donsker property. 
\begin{proof}
We need to prove the weak convergence of $\{G_N f_t, \ t \in I\}$ to the centered gaussian process $\bbG$.
To do so, remark that assumption \bound ensures that second moments of $f(X)$ are finite. This allows to apply the multidimensional version of the central limit theorem to all finite-dimensional marginals of the random function $G_N f$. Hence, these finite-dimensional marginals converge in distribution to those of $\bbG$.
To conclude to the weak convergence of the whole process, the sequence of random functions $\{ G_N f, \ N \geq 1 \}$ needs to be tight. According to \cite[Theorem 12.3]{billingsley2013convergence}, tightness of $\{ G_N f, \ N \geq 1 \}$ is implied by the following two conditions:
\begin{enumerate}
\item There exists $t_0 \in I$, such that $\{ G_N f_{t_0}, \ N \geq 1 \}$ is tight.
\item There exist $\gamma \geq 0$, $\alpha > 1$ and a non-decreasing function $\psi : I \to \R$ such that $\forall t,s \in I$, $\forall N \geq 1$,
\begin{equation*}
\expect{}{\abs{ G_N f_t - G_N f_s }^\gamma} \leq \abs{\psi(t) - \psi(s)}^{\alpha}.
\end{equation*}
\end{enumerate}

Let us start by proving point 1. Let $t_0$ be any point in $I$.
We have to prove that for all $\eta > 0$, there exists $\alpha > 0$ such that for all $N \geq 1$
\begin{equation}
P\left(  \abs{G_N f_{t_0}} \leq \alpha   \right) > 1- \eta.
\label{eq:tight}
\end{equation}
If $Var(f_{t_0}(X)) = 0$, $\{ G_N f_{t_0}, \ N \geq 1 \}$ is tight, since for all $N$, $G_N f_{t_0} = 0$ $P$-\as.
Otherwise, fix $\eta > 0$. As the left hand side in (\ref{eq:tight}) is non-decreasing with respect to $\alpha$, we may as well show that there exists $\alpha$ such that for all $N$, $P\left(  -\alpha < G_N f_{t_0} \leq \alpha   \right) > 1- \eta$. Following from \bound, $f_{t_0}(X)$ admits a third moment. Combined with the positive variance, we can apply the Berry-Essen Theorem: if $F_N$ and $\phi$ denote the cumulative distribution functions of, respectively, $G_N f_{t_0}$ and a centered Gaussian variable of variance $Var(f_{t_0}(X))$, then there exists a constant $C$ such that for all $\alpha \in \R$ and all $N \geq 1$
\begin{equation*}
\abs{ F_N(\alpha) - \phi(\alpha)} \leq \frac{C}{\sqrt{N}}.
\end{equation*}
Now we take $N_\eta$ such that for all $N>N_\eta$, $C / \sqrt{N} < \eta / 4$, and we choose $\alpha_\eta$ such that $\phi(\alpha_\eta) > 1 - \eta / 4$. Then, for all $N > N_\eta$
\begin{align*}
  & P\left(  -\alpha_\eta < G_N f_{t_0} \leq \alpha_\eta   \right) \\
= & F_N(\alpha_\eta) - F_N(-\alpha_\eta) \\
= & \phi(\alpha_\eta) - \phi(-\alpha_\eta) + \left(F_N(\alpha_\eta) - \phi(\alpha_\eta)  \right) -  \left(F_N(-\alpha_\eta) - \phi(-\alpha_\eta)  \right) \\
\geq & \phi(\alpha_\eta) - \phi(-\alpha_\eta) - 2\frac{C}{\sqrt{N}} \\
= & 2 \phi(\alpha_\eta) - 1 - 2\frac{C}{\sqrt{N}} \\
> & 2 \left(1 - \frac{\eta}{4} \right) -  1 - 2 \frac{\eta}{4} = 1 - \eta
\end{align*}
One can easily choose $\alpha > \alpha_\eta$ to extend the inequality $P\left(  -\alpha_\eta < G_N f_{t_0} \leq \alpha_\eta   \right) > 1 - \eta$ to all $N \geq 1$. This finishes the proof of point 1.

The proof of point 2, is a consequence of assumption \lipsch.
For all $t,s \in I$, one has the following inequality
\begin{align*}
&\expect{}{\abs{ G_N f_t - G_N f_s }^2} \\
=& \expect{}{ \abs{(f_t(X) - Pf_t) - (f_s(X) - Pf_s)}^2 } \\
\leq & (2k\abs{t-s})^2  \\ %\qquad  \textnormal{as } \abs{(f_t(X) - Pf_t) - (f_s(X) - Pf_s)} \leq 2k\abs{t-s} \\
=& ( 2kt - 2ks )^2.
\end{align*}
This proves point 2. with $\gamma =2$, $\alpha=2$ and $\psi(u) = 2ku$, and finishes the proof of Theorem~\ref{theo:donsker}.
\end{proof}

The proof of Gaussian approximation is based on a result from \cite{berthet2006revisiting}. They derive rates for the Gaussian approximation of more general processes. They do so by approaching the process by finite-dimensional marginals and applying a multidimensional Gaussian approximation result. Controlling the covering number of the family (or equivalently its metric entropy) allows them to derive a good trade-off between a good approximation by the marginals and keeping the dimension small enough to obtain good rates in the multidimensional Gaussian approximation. For the sake of completeness, let us recall their result.

Let $\M$ be the set of all measurable real-valued functions on $(\bbX, \X)$. The authors work with a centered and scaled empirical process $\{G_N \tilde{f} , \ \tilde{f} \in \tilde{\F} \}$ indexed by a general family $\tilde{\F} \subset \M$, not necessarily indexed by $I$. Their result shows that, under mild assumptions, this process can be approached by a centered Gaussian process $\tilde{\bbG}$ indexed by $\tilde{\F}$ with covariance $\expect{}{\tilde{\bbG}(\tilde{f}) \tilde{\bbG}(\tilde{g})} = P\tilde{f} \tilde{g} - P \tilde{f} P \tilde{g}$, for $\tilde{f}, \tilde{g} \in \tilde{\F}$.
Let us define the covering number of $\tilde{\F}$.
First, assume that there exists $\tilde{F} \in \M$ such that for all $\tilde{f} \in \tilde{\F}$ and all $x \in \bbX$, $\vert \tilde{f}(x) \vert \leq \tilde{F}(x)$. We say that $\tilde{F}$ is an \textit{envelope} of $\tilde{\F}$.  For all probability measures $Q$ on $(\bbX, \X)$, we consider the semi-metric $d_Q(g,h)^2 = \int (g-h)^2 dQ$, for $g,h \in \M$. Under $d_Q$, we define the ball of radius $\delta >0$ centered in $h \in \M$ by $B_Q(h,\delta) := \{g \in \mathcal{M}, \ d_Q(g,h) < \delta\}$. We also define $\tilde{F}_Q^2 := \int \tilde{F}^2 dQ$. For $\delta>0$, let $N(\tilde{\F}, d_Q, \delta)$ be the size of the smallest finite subset $K \subset \M$ verifying that the union of balls $B_Q(h,\delta)$ for $h$ in $K$ covers $\tilde{\F}$.
Finally, we set  the covering number of $\tilde{\F}$ to be $N(\tilde{\F}, \delta) := \sup_{Q} N(\tilde{\F}, d_Q, \delta . \tilde{F}_Q)$, where the supremum is taken over all $Q$ such that $0 < \tilde{F}_Q < \infty$.

Before stating the result of \cite{berthet2006revisiting}, consider these two basic assumptions.

\hspace{0.0\linewidth}
\begin{minipage}{0.95\linewidth}
\begin{itemize}
\item[(F.i)] For some $M>0$ and for all $\tilde{f} \in \tilde{\F}$, $\sup _{x \in \bbX} \vert \tilde{f}(x) \vert \leq M / 2$.
\item[(F.ii)] The class $\tilde{\F}$ is point-wise measurable, i.e., there exists a countable subclass $\tilde{\F}_\infty$ of $\tilde{\F}$ such that we can find for any function $\tilde{f} \in \tilde{\F}$ a sequence of functions $\{\tilde{f}_m\}$ in $\tilde{\F}_\infty$ for which $\lim _{m \rightarrow \infty} \tilde{f}_m(x)=\tilde{f}(x)$ for all $x \in \bbX$.
\end{itemize}
\end{minipage}

\begin{proposition}{\cite[Proposition 1.]{berthet2006revisiting}}
Assume that $\tilde{\F}$ verifies (F.i) and (F.ii). Take $\tilde{F} := M/2$ as the envelope of $\tilde{\F}$. Moreover, assume that there exist positive constants $c_0$ and $v_0$ such that
$N(\tilde{\F}, \delta) \leq c_0 \delta^{-v_0}$. Then, for each $\lambda>1$ there is a constant $\rho(\lambda)$ such that for each $N$, one can construct $X_1, \dots, X_N$ and $\tilde{\bbG}^{(N)}$ such that
\begin{equation*}
\proba{}{\sup\limits_{\tilde{f} \in \tilde{\F}} \abs{G_N \tilde{f} - \tilde{\bbG}^{(N)}(\tilde{f})} > \rho(\lambda) N^{-\frac{1}{2 + 5v_0}} \log N^{\frac{4+5v_0}{4+10v_0}} } \leq N^{-\lambda}.
\end{equation*}
\label{prop:berthet}
\end{proposition}

Let us now prove the Gaussian approximation in Theorem~\ref{theo:donsker} by applying the previous proposition.

\begin{proof}
The proof consists in applying Proposition~\ref{prop:berthet} to $\F$ with $v_0 = 1$. Clearly, assumption \bound implies (F.i). Since the paths $t \to f_t(x)$ are continuous for all $x \in  \bbX$, taking $\F_\infty = \{ f_t, \ t \in I \cap \Q \}$ gives (F.ii), where $\Q$ is the set of rational numbers.

Let us prove the upper-bound on the covering number.
Let $L$ be the length of $I$. From \lipsch, we know that there exists a constant $k$, such that $t \to f_t(x)$ is $k$-lipschitz, for all $x$. Considering a regular grid on $I$ , $\min I = t_0 < t_1 < \dots <t_q = \max I$. For all $t \in I$ there exists a integer $j$ such that for all $x \in \bbX$, $\abs{ f_t(x) - f_{t_j}(x) } \leq k \abs{t-t_j} \leq kL/q$. Hence, for all probability measures $Q$, $d_Q(f_t, f_{t_j}) \leq kL/q$, meaning that $N(\F, d_Q, kT/q) \leq q+1$. As this last inequality stands for all $Q$, we can find a constant $c_0>0$ such that $N(\F, \delta) \leq c_0 \delta^{-1}$, for all $\delta>0$. This finishes the proof.
\end{proof}

\section{Application to the Heat distance processes.}
\label{sec:HDdonsker_proof}

\subsection{Bounds for the laplacian eigenvalues.}

Let us start by proving a lemma on the extremal laplacian eigenvalues of graphs in $\Gn$. We define $\Lambda_{\min}$ and $\Lambda_{\max}$ as, respectively, the minimal and maximal positive laplacian eigenvalues of graphs in $\Gn$:
\begin{align*}
\Lambda_{\min} &:= \inf \{ \lambda > 0, \ \textnormal{s.t. } \lambda \textnormal{ is an eigenvalue of } L(G),  G \in  \Gn \} \\
\Lambda_{\max} &:= \sup \{ \lambda > 0, \ \textnormal{s.t. } \lambda \textnormal{ is an eigenvalue of } L(G),  G \in  \Gn \}.
\end{align*}

\begin{lemma} $\Lambda_{\min}$ and $\Lambda_{\max}$ satisfy the following bounds:
\begin{align}
\Lambda_{\min} & \geq \frac{8 w_{\min}}{ n^2 }, \label{eq:lambda_min}\\
\Lambda_{\max} & \leq n w_{\max} .
\end{align}
\label{lem:extreme_egval}
\end{lemma}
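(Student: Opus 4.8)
The plan is to handle the two inequalities separately, each time reducing a Rayleigh quotient to a combinatorial quantity.

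For $\Lambda_{\max}$ I would start from the quadratic form of the Laplacian: for every $x\in\R^{n}$, $x^{T}L(G)x=\sum_{\{i,j\}\in E}w_{i,j}(x_i-x_j)^{2}\le w_{\max}\sum_{1\le i<j\le n}(x_i-x_j)^{2}$. A one-line expansion gives $\sum_{i<j}(x_i-x_j)^{2}=n\norm{x}^{2}-\big(\sum_i x_i\big)^{2}\le n\norm{x}^{2}$, so $x^{T}L(G)x\le n w_{\max}\norm{x}^{2}$ for all $x$ and all $G\in\Gn$. Hence every Laplacian eigenvalue is at most $n w_{\max}$, and taking the supremum over $G$ gives $\Lambda_{\max}\le n w_{\max}$.

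For $\Lambda_{\min}$ the first step is to remove the weights. Let $G_{0}$ be the underlying unweighted graph of $G$ (an edge wherever the weight is positive, hence $\geq w_{\min}$). Since $w_{i,j}\ge w_{\min}$ on every edge, $x^{T}L(G)x\ge w_{\min}\,x^{T}L(G_{0})x$, i.e.\ $L(G)\succeq w_{\min}L(G_{0})$ in the Loewner order; moreover $\ker L(G)=\ker L(G_{0})$ is exactly the space of vectors that are constant on each connected component of $G_{0}$. Let $c$ be the number of those components. Weyl's monotonicity theorem gives $\lambda_{c+1}(L(G))\ge w_{\min}\lambda_{c+1}(L(G_{0}))$, and since both kernels have dimension $c$ this says the smallest positive eigenvalue of $L(G)$ is at least $w_{\min}$ times the smallest positive eigenvalue of $L(G_{0})$, the latter being $\min_{C}\lambda_{2}(G_{0}[C])$ over the components $C$. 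The second step is the spectral estimate for connected unweighted graphs: I would invoke the classical fact (Fiedler) that among all connected graphs on $m$ vertices the path $P_{m}$ minimizes the algebraic connectivity, together with the closed form $\lambda_{2}(P_{m})=2\big(1-\cos(\pi/m)\big)=4\sin^{2}\!\big(\pi/(2m)\big)$, which is decreasing in $m$. As each component of $G_{0}$ has at most $n$ vertices, $\min_{C}\lambda_{2}(G_{0}[C])\ge 4\sin^{2}\!\big(\pi/(2n)\big)$. Finally an elementary concavity estimate finishes it: $\sin$ lies above its chord on $[0,\pi/4]$, hence $\sin\theta\ge \tfrac{2\sqrt2}{\pi}\,\theta$ there; taking $\theta=\pi/(2n)\in(0,\pi/4]$ (valid for $n\ge 2$) gives $\sin\!\big(\pi/(2n)\big)\ge \sqrt2/n$, so $4\sin^{2}\!\big(\pi/(2n)\big)\ge 8/n^{2}$. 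Chaining the inequalities yields $\Lambda_{\min}\ge 8w_{\min}/n^{2}$.

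The step I expect to be the real obstacle is pinning down the exact constant $8$. The naive route — Cauchy--Schwarz along a shortest path joining the extreme coordinates of a unit vector $x\perp\mathbf{1}$, using that this path has at most $n-1$ edges and that such an $x$ has spread $x_{(n)}-x_{(1)}\ge 2/\sqrt n$ — only yields $\lambda_{2}(G_{0})\ge 4/\big(n(n-1)\big)$, which is below $8/n^{2}$ as soon as $n\ge 3$. Getting the stated constant seems to genuinely require the extremality of $P_m$ and the exact value of $\lambda_{2}(P_m)$; everything else (the Loewner comparison, Weyl's inequality, the trigonometric bound) is routine. If one prefers a self-contained argument, the extremality of $P_m$ can be re-derived by passing to a spanning tree and inducting on trees, but citing it is cleaner.
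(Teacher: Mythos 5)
Your proof is correct. The $\Lambda_{\max}$ half is essentially the paper's argument: you bound the quadratic form by $w_{\max}\sum_{i<j}(x_i-x_j)^2$ and identify that sum as $n\norm{x}^2-(\sum_i x_i)^2\le n\norm{x}^2$, which is exactly the comparison with the complete-graph Laplacian that the paper writes out. The $\Lambda_{\min}$ half, however, takes a genuinely different route. The paper works directly with the weighted graph: it introduces Fiedler's \emph{average minimal cut} $\gamma(G)=\min_{\emptyset\neq U\subsetneq V}\sum_{i\in U,\,j\notin U}w_{i,j}/(|U|(n-|U|))$, bounds the numerator below by $w_{\min}$ (connectivity forces a crossing edge) and the denominator above by $n^2/4$, and then invokes Fiedler's 1995 inequality $\lambda_2(G)\ge 2\gamma(G)$ to get $8w_{\min}/n^2$ in one line; disconnected graphs are handled by restricting to the component realizing the smallest positive eigenvalue. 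You instead strip the weights first via the Loewner comparison $L(G)\succeq w_{\min}L(G_0)$ and a Weyl/min--max step (which, as you note, requires matching the kernel dimensions — your handling of that is correct), reduce to connected unweighted components, and then use Fiedler's 1973 extremality of the path together with $\lambda_2(P_m)=4\sin^2(\pi/(2m))$ and a chord bound on $\sin$. Both arguments lean on a nontrivial result of Fiedler, just different ones. The paper's version is shorter once $\lambda_2\ge 2\gamma$ is granted and treats weights natively; yours is more transparent about where the constant comes from (the path gives $\pi^2/n^2$ asymptotically, deliberately degraded to $8/n^2$), actually establishes the slightly stronger bound $\Lambda_{\min}\ge 4w_{\min}\sin^2(\pi/(2n))$, and cleanly factors the weighted-to-unweighted and disconnected-to-connected reductions. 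Your closing remark is also accurate: the naive shortest-path/Cauchy--Schwarz argument only yields order $1/n^2$ with a worse constant, so some form of Fiedler-type input is needed to reach $8$ exactly. The only cosmetic caveat is the degenerate cases ($n=1$, or a graph with no edges), where $\Lambda_{\min}$ is an infimum over an empty set and the claim is vacuous; the paper glosses over this too.
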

Note that (\ref{eq:lambda_min}) will not be used in the rest of the paper. Still, we choose to present it as we believe it could be of future use. 

\begin{proof}
Take $G \in \Gn$, we want to prove that $\lambda_{min}$, the smallest positive eigenvalue of $L(G)$, verifies
$ \lambda_{min} \geq 8 w_{\min} n^{-2}$. To do so, we apply a Cheeger-type inequality. First assume that $G$ is connected, hence $\lambda_{min} = \lambda_2(G)$. Let $V$ be the set of vertices of $G$. Following \cite[Section 3]{fiedler1995estimate}, we define the \textit{average minimal cut} of $G$ by
\begin{equation*}
\gamma(G) = \min_{\emptyset \neq U \subsetneq V} \sum\limits_{i \in U, \ j \in V \backslash U} \frac{  w_{i,j} }{ \abs{U} ( \nnodes - \abs{U}) }.
\end{equation*}
As $G$ is connected, there exists at least one edge with a weight greater than $w_{\min}$ joining $U$ and $V \backslash U$. Hence,
\begin{equation*}
\sum\limits_{i \in U, \ j \in V \backslash U} w_{i,j} \geq w_{\min}.
\end{equation*}
Moreover, for all $U$, $\abs{U} ( \nnodes - \abs{U}) \leq \nnodes^2 / 4$. This yields to $\gamma(G) \geq 4 w_{\min}/n^2$. From \cite[Theorem 2]{fiedler1995estimate}, we have that $\lambda_2(G) \geq 2 \gamma(G)$, hence $\lambda_2(G) \geq 8 w_{\min} n^{-2}$.
If now $G$ is not connected, one can check that there exists $G_{sub}$ a connected subgraph of $G$ of size $\nnodes_{sub}$, such that $\lambda_{min} = \lambda_2(G_{sub})$. This gives
\begin{equation*}
\lambda_{min} = \lambda_2(G_{sub}) \geq \frac{8 w_{\min}}{ n_{sub}^2} \geq \frac{8 w_{\min}}{ n^2 }.
\end{equation*}
This finishes the proof of the first bound.

We now prove the second bound. The largest eigenvalue of L(G) is denoted by $\lambda_\nnodes$. Letting $x$ be an eigenvector associated to $\lambda_\nnodes$ verifying $\|x\|_2 = 1$, we have
\begin{equation*}
\lambda_n = x^T L(G) x = \sum\limits_{i<j} w_{i,j} (x_i - x_j)^2 \leq w_{\max}  \sum\limits_{i<j} (x_i - x_j)^2 = w_{\max} \  x^T L(G_{comp}) x
\end{equation*}
where $G_{comp}$ is the complete undirected graph, hence
\begin{equation*}
L(G_{comp}) = \begin{pmatrix}
\nnodes-1 & -1      & \cdots & -1 \\
-1        & \ddots  & \ddots & \vdots \\
\vdots    & \ddots  & \ddots & -1 \\
-1        & \cdots  & -1     & \nnodes-1
\end{pmatrix}.
\end{equation*}
One can check that $x^T L(G_{comp}) x \leq \lambda_n(L(G_{comp}))=\nnodes$, so $\lambda_n \leq n w_{\max}$.
\end{proof}

\subsection{Result on HKD processes.}
\label{sec:HKDdonsker_proof}

To apply Theorem~\ref{theo:donsker} to the HKD processes we first prove that they are uniformly bounded and Lipschitz-continous.

\begin{proposition} For all $G,G'$ in $\Gn$, denote by $(\lambda_k)_{1 \leq k \leq \nnodes}$ and $(\phi_k)_{1 \leq k \leq \nnodes}$ (\textit{resp.} $(\lambda_l')_{1 \leq l \leq \nnodes}$ and $(\phi_l')_{1 \leq l \leq \nnodes}$) the eigenvalues and orthonormal eigenvectors of $L(G)$ (\textit{resp.} $L(G')$). Remember that we always choose $\phi_1$ and $\phi_1'$ equal to the vector with all entries equal to $1/\sqrt{\nnodes}$. We denote by $\langle \cdot, \cdot \rangle$ the standard scalar product in $\R^\nnodes$. Then, the application ${t \to D_t((G,G'))}$ verifies:
\begin{enumerate}
\item For all  $t \in \segm$, $D_t((G,G'))$ can be written in terms of the eigen-elements of $L(G)$ and $L(G')$:
\begin{equation}
 D_t((G,G')) = \left(  \sum\limits_{k,l = 2}^\nnodes  (e^{-t\lambda_k} - e^{-t \lambda_l'})^2 \langle \phi_k, \phi_l' \rangle^2   \right)^{1/2} .
\label{eq:hkd_expression}
\end{equation}
\item For all $t \in \segm$, $D_t((G,G')) \leq \sqrt{\nnodes}$.
\item $t \mapsto D_t((G,G'))$ is $(\nnodes^{3/2} w_{\max})$-Lipschitz continuous on $\segm$.
\end{enumerate}
\label{prop:hkd}
\end{proposition}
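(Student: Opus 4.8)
The plan is to establish the three claims in order, since each feeds into the next. For claim (1), I would start from the spectral decomposition \refb{eq:heat_kernel_decomposition} of the two heat kernels and expand the Frobenius norm as $\|e^{-tL} - e^{-tL'}\|_F^2 = \mathrm{tr}\big((e^{-tL} - e^{-tL'})^T(e^{-tL} - e^{-tL'})\big)$. Writing $e^{-tL} = \sum_k e^{-t\lambda_k}\phi_k\phi_k^T$ and $e^{-tL'} = \sum_l e^{-t\lambda_l'}\phi_l'(\phi_l')^T$, and using $\mathrm{tr}(\phi_k\phi_k^T\phi_l'(\phi_l')^T) = \langle \phi_k, \phi_l'\rangle^2$ together with $\mathrm{tr}(\phi_k\phi_k^T\phi_{k'}\phi_{k'}^T) = \delta_{k,k'}$ (orthonormality), the cross terms collapse and one gets $D_t((G,G'))^2 = \sum_{k,l=1}^n (e^{-t\lambda_k} - e^{-t\lambda_l'})^2 \langle \phi_k,\phi_l'\rangle^2$. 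The key observation to reduce the sum to $k,l \geq 2$ is that $\lambda_1 = \lambda_1' = 0$ and $\phi_1 = \phi_1'$ is the normalized all-ones vector: the $(k,l)=(1,1)$ term vanishes because $e^0 - e^0 = 0$; and for $k=1, l\geq 2$ (or symmetrically $l=1, k\geq2$), $\langle \phi_1, \phi_l'\rangle = 0$ since $\phi_l'$ is orthogonal to $\phi_1' = \phi_1$. Hence all terms involving the index $1$ drop out, giving \refb{eq:hkd_expression}.

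For claim (2), I would bound each factor in \refb{eq:hkd_expression}. Since $\lambda_k, \lambda_l' \geq 0$ and $t \geq 0$, we have $0 \leq e^{-t\lambda_k}, e^{-t\lambda_l'} \leq 1$, so $(e^{-t\lambda_k} - e^{-t\lambda_l'})^2 \leq 1$. Therefore $D_t((G,G'))^2 \leq \sum_{k,l=2}^n \langle \phi_k, \phi_l'\rangle^2$. The double sum is at most $n$: for fixed $k$, $\sum_{l=1}^n \langle \phi_k, \phi_l'\rangle^2 = \|\phi_k\|_2^2 = 1$ because $(\phi_l')_l$ is an orthonormal basis, so $\sum_{l=2}^n \langle \phi_k,\phi_l'\rangle^2 \leq 1$, and summing over the $n-1$ values of $k$ gives a bound of $n-1 \leq n$. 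Thus $D_t((G,G')) \leq \sqrt{n}$.

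For claim (3), differentiate: from \refb{eq:HKD_def} and \refb{eq:heat_eq}-style reasoning, $\frac{d}{dt} e^{-tL} = -L e^{-tL}$, so the derivative of $t \mapsto D_t((G,G'))^2$ is $2\,\mathrm{tr}\big((e^{-tL} - e^{-tL'})(-Le^{-tL} + L'e^{-tL'})\big)$, and hence $\big|\frac{d}{dt} D_t\big| \leq \|{-Le^{-tL} + L'e^{-tL'}}\|_F$ by Cauchy-Schwarz for the Frobenius inner product (dividing by $2D_t$; one handles $D_t = 0$ by a limiting argument or by noting the map $t\mapsto D_t$ is a composition of a smooth matrix-valued path with the norm, whose Lipschitz constant is controlled by the speed of the path). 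It then suffices to bound $\|Le^{-tL}\|_F$. Using the spectral decomposition, $Le^{-tL} = \sum_k \lambda_k e^{-t\lambda_k}\phi_k\phi_k^T$, so $\|Le^{-tL}\|_F^2 = \sum_k (\lambda_k e^{-t\lambda_k})^2 \leq \sum_k \lambda_k^2$; alternatively $\|Le^{-tL}\|_F \leq \|L\|_F$ since multiplying by $e^{-tL}$ (a contraction) does not increase the Frobenius norm. Then $\|L\|_F \leq \sqrt{n}\,\|L\|_{op} = \sqrt{n}\,\lambda_n \leq \sqrt{n}\cdot n w_{\max} = n^{3/2} w_{\max}$ by Lemma~\ref{lem:extreme_egval}. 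Combining, $|\frac{d}{dt}D_t| \leq \|Le^{-tL}\|_F + \|L'e^{-tL'}\|_F$; a direct application of this gives $2n^{3/2}w_{\max}$, so to get the stated constant $n^{3/2}w_{\max}$ one must be slightly more careful — e.g. bound $\|{-Le^{-tL} + L'e^{-tL'}}\|_F$ jointly rather than by the triangle inequality, exploiting that both terms act "in the same direction" on the difference $e^{-tL}-e^{-tL'}$, or use the sharper eigenvalue-by-eigenvalue estimate on the combined expression. This sharpening of the Lipschitz constant is the step I expect to be the main obstacle; the rest is routine spectral bookkeeping.
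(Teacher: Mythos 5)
Your proofs of claims (1) and (2) are correct and follow the same route as the paper: trace expansion of the squared Frobenius norm, reduction of the double sum to $k,l\geq 2$ via $\lambda_1=\lambda_1'=0$ and $\phi_1=\phi_1'$, then the bound $(e^{-t\lambda_k}-e^{-t\lambda_l'})^2\leq 1$ together with orthonormality. For claim (3), however, there is a genuine gap, and you correctly identify where it is: bounding $\abs{\tfrac{d}{dt}D_t}\leq \norm{Le^{-tL}-L'e^{-tL'}}_F$ and then applying the triangle inequality gives $2\nnodes^{3/2}w_{\max}$, not the stated $\nnodes^{3/2}w_{\max}$, and your suggestion that the two terms ``act in the same direction'' is not an argument. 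The missing ingredient is a cancellation between $\lambda_k e^{-t\lambda_k}$ and $\lambda_l' e^{-t\lambda_l'}$, which the paper captures as follows. It differentiates the coordinate expression \refb{eq:hkd_expression} directly and applies Cauchy--Schwarz in the weighted $\ell^2$ space with weights $c_{kl}=\langle\phi_k,\phi_l'\rangle^2$ (the denominator $D_t$ cancels exactly), leaving
\begin{equation*}
\abs{\tfrac{d}{dt}D_t((G,G'))}\ \leq\ \Bigl(\textstyle\sum_{k,l\geq 2}\abs{\lambda_k e^{-t\lambda_k}-\lambda_l' e^{-t\lambda_l'}}^2\,\langle\phi_k,\phi_l'\rangle^2\Bigr)^{1/2}.
\end{equation*}
The key pointwise estimate is then the mean value theorem applied to $x\mapsto xe^{-tx}$, whose derivative $e^{-tx}(1-tx)$ has absolute value at most $1$ for $x,t\geq 0$; hence $\abs{\lambda_k e^{-t\lambda_k}-\lambda_l' e^{-t\lambda_l'}}\leq\abs{\lambda_k-\lambda_l'}\leq\Lambda_{\max}\leq \nnodes w_{\max}$. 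Combined with $\sum_{k,l}\langle\phi_k,\phi_l'\rangle^2\leq \nnodes$, this yields exactly $\nnodes^{3/2}w_{\max}$ with no factor of $2$.

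Your matrix-level approach can in fact be salvaged with the same device: by the identical trace computation as in claim (1), $\norm{Le^{-tL}-L'e^{-tL'}}_F^2=\sum_{k,l}(\lambda_k e^{-t\lambda_k}-\lambda_l'e^{-t\lambda_l'})^2\langle\phi_k,\phi_l'\rangle^2$, and the mean value theorem bound above then gives $\norm{Le^{-tL}-L'e^{-tL'}}_F\leq \nnodes^{3/2}w_{\max}$ directly. The point is that the loss occurs precisely when you split the difference by the triangle inequality before exploiting that $\lambda_k$ and $\lambda_l'$ both lie in $[0,\Lambda_{\max}]$. You should also make the Cauchy--Schwarz step that produces $\abs{\tfrac{d}{dt}D_t}\leq\norm{Le^{-tL}-L'e^{-tL'}}_F$ rigorous at points where $D_t=0$: the paper treats the case $G=G'$ separately and notes that for $G\neq G'$ the map $t\mapsto D_t$ is $\C^1$ on $(0,T]$, which is a cleaner way to handle the division than a limiting argument.
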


\begin{proof}
Let $G,G'$ be in $\Gn$.
We start by proving (\ref{eq:hkd_expression}). This is done through the following computation:
\begin{align*}
&\|e^{-tL} - e^{-tL'}\|_F^2 \\
=& \|e^{-tL}\|_F^2 + \|e^{-tL'}\|_F^2 - 2 Tr\left( e^{-tL} e^{-tL'}   \right) \\
=& \sum\limits_{k=1}^\nnodes e^{-2t \lambda_k} \| \phi_k \phi_k^T \|_F^2 + \sum\limits_{l=1}^\nnodes e^{-2t \lambda'_l} \| \phi'_l {\phi'_l}^T \|_F^2 - 2 \sum\limits_{k,l = 1}^\nnodes e^{-t \lambda_k} e^{-t \lambda'_l} Tr \left( \phi_k \phi_k^T \phi'_l {\phi'_l}^T \right)
\end{align*}
One can prove that $Tr \left( \phi_k \phi_k^T \phi'_l {\phi'_l}^T \right) = \langle \phi_k, \phi'_l \rangle^2$. \\
Similarly $\| \phi_k \phi_k^T \|_F^2 = \| \phi_k \|_2^2 = \sum\limits_{l=1}^\nnodes \langle \phi_k, \phi'_l \rangle^2$ and $\| \phi'_l {\phi'_l}^T \|_F^2 = \| \phi'_l \|_2^2 = \sum\limits_{k=1}^\nnodes \langle \phi_k, \phi'_l \rangle^2$. So
\begin{align*}
\|e^{-tL} - e^{-tL'}\|_F^2 =& \sum\limits_{k,l = 1}^\nnodes  (e^{-t\lambda_k} - e^{-t \lambda_l'})^2 \langle \phi_k, \phi_l' \rangle^2 .
\end{align*}
The sums can start at $k=2$ and $l=2$ thanks to the facts that $\lambda_1 = \lambda'_1 = 0$ and $\phi_1 = \phi'_1$ combined with the orthogonality of the eigenvectors families. This finishes the proof of (\ref{eq:hkd_expression}).

Bounding all terms $(e^{-t\lambda_k} - e^{-t \lambda_l'})^2$ by 1 in (\ref{eq:hkd_expression}), and using the orthonormality of the eigenvectors families yields to $D_t((G,G')) \leq \sqrt{\nnodes}$.

We now prove the Lipschitz result. One can check that $t \to D_t((G,G'))$ is $\C^1$ on $\segm$. The case $G=G'$ is easily dealt with. Assume now that $G \neq G'$. For all $t \in (0,T]$,
\begin{align*}
\abs{\frac{d}{dt} D_t((G,G'))} =& \abs{ \frac{ -\sum\limits_{k,l=2}^\nnodes  (e^{-t \lambda_k}-e^{-t\lambda_l'})(\lambda_k e^{-t \lambda_k}-\lambda_l' e^{-t\lambda_l'})\langle \phi_k, \phi_l' \rangle^2}{\left( \sum\limits_{k,l=2}^\nnodes  (e^{-t \lambda_k}-e^{-t\lambda_l'})^2\langle \phi_k, \phi_l' \rangle^2 \right)^{1/2}}} \\
\leq& \sum\limits_{k,l=2}^\nnodes \frac{ \abs{e^{-t \lambda_k}-e^{-t\lambda_l'}} \abs{\lambda_k e^{-t \lambda_k}-\lambda_l' e^{-t\lambda_l'}}\abs{\langle \phi_k, \phi_l' \rangle}^2}{\left( \sum\limits_{i,j=2}^\nnodes  (e^{-t \lambda_i}-e^{-t\lambda_j'})^2\langle \phi_i, \phi_j' \rangle^2 \right)^{1/2}} \\
 \leq & \sqrt{\sum\limits_{k,l=2}^\nnodes  \abs{\lambda_k e^{-t \lambda_k}-\lambda_l' e^{-t\lambda_l'}}^2 \langle \phi_k, \phi_l' \rangle^2}
\sqrt{ \frac{ \sum\limits_{k,l=2}^\nnodes (e^{-t \lambda_k}-e^{-t\lambda_l'})^2 \langle \phi_k, \phi_l' \rangle^2}{\sum\limits_{i,j=2}^\nnodes  (e^{-t \lambda_i}-e^{-t\lambda_j'})^2\langle \phi_i, \phi_j' \rangle^2 }} \\
=& \sqrt{\sum\limits_{k,l=2}^\nnodes  \abs{\lambda_k e^{-t \lambda_k}-\lambda_l' e^{-t\lambda_l'}}^2 \langle \phi_k, \phi_l' \rangle^2},
\end{align*}
where the last inequality comes from the Cauchy-Schwarz inequality.

According to the mean value theorem, for all $k$ and $l$
\[ \abs{\lambda_k e^{-t \lambda_k}-\lambda_l' e^{-t\lambda_l'}} \leq \abs{\lambda_k - \lambda_l'} \leq \Lambda_{\max}. \]

Hence, for all $t\in (0,T]$,
\[
\abs{\frac{d}{dt} D_t((G,G'))} \leq \Lambda_{\max} \sqrt{\sum\limits_{k,l=2}^\nnodes  \langle \phi_k, \phi_l' \rangle^2} \leq n w_{\max} \sqrt{n} = n^{3/2}w_{\max}.\]
This finishes the proof.
\end{proof}

The proof of Theorem~\ref{theo:HKDdonsker} is a direct application of Theorem~\ref{theo:donsker}, as Proposition~\ref{prop:hkd} ensures that the hypotheses of Theorem~\ref{theo:donsker} are verified by the HKD process.

\subsection{Result on HPD processes.}
\label{sec:HPDdonsker_proof}

To prove Theorem~\ref{theo:HPDdonsker}, we start by showing that under the hypotheses of the theorem the HPD processes are uniformly bounded and Lipschitz-continuous.

\begin{proposition}
For all $G, G'$ in $\Gun$, the application $t \to H_t((G,G'))$ verifies:
\begin{enumerate}
\item For all $t \in \segm$ , $0 \leq H_t((G,G')) \leq 1$.
\item $t \mapsto H_t((G,G'))$ is $(2 \nnodes w_{\max})$-Lipschitz-continuous on $\segm$.
\end{enumerate}
\label{prop:hpd}
\end{proposition}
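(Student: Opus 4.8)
The plan is to treat the two claims separately, each reducing to a statement about the Heat Kernel Signature $h_t$ and then invoking the Bottleneck stability result of Theorem~\ref{theo:bottleneck_stability}.

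For the boundedness claim, I would first observe that the bottleneck distance between any two diagrams is controlled in a one-sided way by the stability inequality, but here $H_t$ compares diagrams coming from \emph{different} graphs, so Theorem~\ref{theo:bottleneck_stability} does not apply directly. Instead I would bound each diagram's points individually: every coordinate $(b,d)$ appearing in $\dg(G,h_t(G))$ is a value of $h_t(G)$ on some vertex, so it suffices to bound $\|h_t(G)\|_\infty$. From the expression \refb{eq:HKS_def}, $h_t(G)(i)=\sum_k e^{-t\lambda_k}\phi_k(i)^2 \in [0,1]$: the upper bound follows since $e^{-t\lambda_k}\le 1$ and $\sum_k \phi_k(i)^2 = 1$ (the rows of the orthogonal matrix $\phi$ are unit vectors), the lower bound since all terms are nonnegative. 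Hence all points of all four diagrams lie in $[0,1]^2$, so any bijection to the diagonal moves each point by at most $1$ in $\|\cdot\|_\infty$, giving $\db(\dg(G,h_t(G)),\dg(G',h_t(G')))\le 1$; taking the max over the four constructions gives $H_t((G,G'))\le 1$, and nonnegativity is immediate from the definition of $\db$.

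For the Lipschitz claim, fix $s,t\in\segm$ and $G,G'\in\Gun$. I would use the triangle inequality for $\db$ together with Theorem~\ref{theo:bottleneck_stability}: for each diagram construction $\dg$,
\[
\db(\dg(G,h_t(G)),\dg(G',h_t(G'))) \le \db(\dg(G,h_t(G)),\dg(G,h_s(G))) + \db(\dg(G,h_s(G)),\dg(G',h_s(G'))) + \db(\dg(G',h_s(G')),\dg(G',h_t(G'))),
\]
and the first and third terms are bounded, via \refb{eq:bottleneck_stability}, by $\|h_t(G)-h_s(G)\|_\infty$ and $\|h_t(G')-h_s(G')\|_\infty$ respectively. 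Rearranging and taking the max over $\dg$, one gets $|H_t((G,G'))-H_s((G,G'))| \le \|h_t(G)-h_s(G)\|_\infty + \|h_t(G')-h_s(G')\|_\infty$. It then remains to show that $t\mapsto h_t(G)$ is $(\nnodes w_{\max})$-Lipschitz in $\|\cdot\|_\infty$ for every $G\in\Gun$. Differentiating \refb{eq:HKS_def}, $\frac{d}{dt}h_t(G)(i) = -\sum_k \lambda_k e^{-t\lambda_k}\phi_k(i)^2$, so $|\frac{d}{dt}h_t(G)(i)| \le \max_k \lambda_k e^{-t\lambda_k}\cdot\sum_k \phi_k(i)^2 \le \Lambda_{\max} \le n w_{\max}$ by Lemma~\ref{lem:extreme_egval} (using $\lambda e^{-t\lambda}\le\lambda$ and $\sum_k\phi_k(i)^2=1$), where $n$ here denotes $\nnodes(G)\le\nnodes$. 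Hence each $h_t(G)$ is $(\nnodes w_{\max})$-Lipschitz, and the two terms combine to give the $(2\nnodes w_{\max})$-Lipschitz bound.

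The only mildly delicate point is the first one — that the naive use of stability is not available for comparing diagrams of two different graphs — which is circumvented by the explicit $[0,1]$ bound on the HKS; everything else is a routine application of the triangle inequality for $\db$, the stability theorem, and the mean value theorem. One small bookkeeping issue is that graphs in $\Gun$ may have different sizes, so $n$ must be read as each graph's own size and bounded by $\nnodes$ at the end; this does not affect the argument.
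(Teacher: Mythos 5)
Your proposal is correct and follows essentially the same route as the paper: bounding the HKS values in $[0,1]$ to place all diagram points in $[0,1]^2$ for the first claim, and combining the triangle inequality for $\db$, the stability bound of Theorem~\ref{theo:bottleneck_stability}, and the derivative bound $\abs{\frac{d}{dt}h_t(G)(i)} \leq \lambda_{\nnodes(G)} \leq \nnodes w_{\max}$ for the second. Your explicit remark that stability cannot be applied directly across two different graphs, and your handling of the varying graph sizes in $\Gun$, are both accurate but do not change the argument.
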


\begin{proof}
Recall that for all $G \in \Gun$, and for a vertex $i$,
\[h_t(G)(i) =  \sum\limits_{k = 1}^{\nnodes(G)} e^{-t \lambda_k} \phi_k(i)^2, \]
where $(\lambda_k)_{1 \leq k \leq \nnodes(G)}$ and $(\phi_k)_{1 \leq k \leq \nnodes(G)}$ are the eigenvalues and orthonormal eigenvectors of $L(G)$ and $\nnodes(G)$ is the size of $G$.
Hence $0 \leq h_t(G)(i) \leq 1$ for all $i$, meaning that all points in the diagram $\dg(G,h_t(G))$ are contained in $[0,1]^2$. So from the definition of the Bottleneck distance, $0 \leq H_t((G,G')) \leq 1$, for all $G,G'$ in $\Gun$ and for all $t \in \segm$.

Let us now compute the first derivative of $h_t(G)(i)$:
\[ \frac{d}{dt} h_t(G)(i) = - \sum\limits_{k = 1}^{\nnodes(G)} \lambda_k e^{-t \lambda_k} \phi_k(i)^2. \]
Its absolute value is upper-bounded by $\lambda_{\nnodes(G)}$, the largest eigenvalue of $L(G)$.
From Lemma~\ref{lem:extreme_egval}, we have $\lambda_{\nnodes(G)} \leq \nnodes(G) w_{\max}$. Hence, $t \to h_t(G)$ is $(\nnodes w_{\max})$-Lipschitz continuous on $\segm$. To conclude, we come back to the definition of the HPD in terms of distance between persistence diagrams. Applying the triangular inequality to the Bottleneck distance gives for all $G, G' \in \Gun$, for all $t, t' \in \segm$, and for all diagram construction $\dg$,
\begin{align*}
& \abs{\db(\dg(G,h_t(G)) , \dg(G',h_t(G'))) -  \db(\dg(G,h_{t'}(G)) , \dg(G',h_{t'}(G'))) } \\
\leq & \db(\dg(G,h_{t}(G)) , \dg(G,h_{t'}(G))) + \db(\dg(G',h_{t}(G')) , \dg(G',h_{t'}(G'))).
\end{align*}
Similarly, the same inequality but with maxima over the diagram constructions holds:
\begin{align*}
& \abs{H_t((G,G')) -  H_{t'}((G,G')) } \\
\leq & \max \db(\dg(G,h_{t}(G)) , \dg(G,h_{t'}(G))) +  \max \db(\dg(G',h_{t}(G')) , \dg(G',h_{t'}(G'))).
\end{align*}
Applying Theorem~\ref{theo:bottleneck_stability} and using the Lispchitz continuity of the HKS yields
\begin{align*}
& \abs{H_t((G,G')) -  H_{t'}((G,G')) } \\
\leq &  \|h_t(G) - h_{t'}(G) \|_\infty + \| h_{t}(G') - h_{t'}(G')\|_\infty \\
\leq & 2 n w_{\max} \abs{t-t'}.
\end{align*}
\end{proof}

Theorem~\ref{theo:HPDdonsker} follows from Proposition~\ref{prop:hpd} and Theorem~\ref{theo:donsker}.

\end{appendix}

%%%%%%%%%%%%%%%%%%%%%%%%%%%%%%%%%%%%%%%%%%%%%%
%% Support information, if any,             %%
%% should be provided in the                %%
%% Acknowledgements section.                %%
%%%%%%%%%%%%%%%%%%%%%%%%%%%%%%%%%%%%%%%%%%%%%%
\section*{Acknowledgments}
I am thankful to Frédéric Chazal\footnote{\label{inria}Inria Saclay} and Pascal Massart\footnote{\label{ups}Université Paris-Saclay} for the valuable discussions and advice on this work. 
I also want to thank the people from Datashape\cref{inria} and LMO\cref{ups} for the enriching conversations.
\bibliographystyle{acm} % Style BST file (imsart-number.bst or imsart-nameyear.bst)
\bibliography{bibliography}       % Bibliography file (usually '*.bib')

%% or include bibliography directly:
% \begin{thebibliography}{}
% \bibitem{b1}
% \end{thebibliography}

\end{document}